\documentclass[11pt]{amsart}

\setlength{\floatsep}{12pt}
\setlength{\intextsep}{0pt}
\setlength{\textfloatsep}{0pt}
\setlength{\oddsidemargin}{1cm}
\setlength{\evensidemargin}{1cm}
\setlength{\textwidth}{15cm}
\setlength{\textheight}{22cm}
\setlength{\topmargin}{0cm}

\setlength\intextsep{8pt}   
\setlength\abovecaptionskip{8pt} 
\setlength\textfloatsep{8pt}

\usepackage{amssymb}
\usepackage{amsmath,amssymb,epsf,wrapfig,multicol,epic,eepic,trig,mathrsfs,dsfont,color,graphicx}
 \usepackage[abs]{overpic}
 \usepackage{here}

\theoremstyle{plain}
\newtheorem{thm}{Theorem}
\newtheorem{cor}[thm]{Corollary}
\newtheorem{lem}[thm]{Lemma}
\newtheorem{prop}[thm]{Proposition}

\newtheorem{definition}[thm]{Definition}
\newtheorem{rem}[thm]{Remark}
\newtheorem{example}[thm]{Example}

\def\hsymbu#1{\smash{\lower1.7ex\hbox{\huge$#1$}}}

\setcounter{MaxMatrixCols}{20}

\def\rmoveio#1#2{
\setlength{\unitlength}{#1}
\begin{picture}(50,30)
\put(5,0){\line(0,1){30}}

{\allinethickness{.8pt}
\put(10,15){\vector(1,0){13}}
\put(23,15){\vector(-1,0){13}}}

\qbezier(25,0)(25,20)(40,20)
\qbezier(40,20)(45,20)(45,15)
\qbezier(45,15)(45,10)(40,10)
\qbezier(40,10)(35,10)(31,14)
\qbezier(28,17)(25,25)(25,30)

\ifnum#2=2
\put(30,23){\makebox{${\Huge c_{1}}$}}
\fi

\end{picture}
}

\def\rmoveiio#1#2{
\setlength{\unitlength}{#1}
\begin{picture}(60,30)
\put(5,0){\line(0,1){30}}
\put(15,0){\line(0,1){30}}

{\allinethickness{.8pt}
\put(20,15){\vector(1,0){15}}
\put(35,15){\vector(-1,0){15}}}

\qbezier(40,0)(42,1)(47,3)
\qbezier(52,6)(68,15)(52,24)
\qbezier(47,27)(42,30)(40,30)

\qbezier(60,0)(20,15)(60,30)

\ifnum#2=2
\put(47,17){\makebox{${\Huge c_{1}}$}}
\put(47,9){\makebox{${\Huge c_{2}}$}}
\fi

\ifnum#2=3
\put(2,2){\path(0,0)(3,-3)(6,0)}
\put(12,27){\path(0,0)(3,3)(6,0)}
\put(40,3){\path(0,0)(0,-3)(3,-3)}
\put(60,27){\path(0,0)(0,3)(-3,3)}
\put(47,16){\makebox{${\Huge c_{1}}$}}
\put(47,10){\makebox{${\Huge c_{2}}$}}
\fi

\end{picture}
}
\def\rmoveiiio#1#2{
\setlength{\unitlength}{#1}
\begin{picture}(75,30)
\put(0,0){\line(1,1){15}}
\qbezier(15,15)(20,20)(20,30)

\put(10,0){\line(-1,1){4}}
\qbezier(4,6)(-5,15)(5,25)
\put(5,25){\line(1,1){5}}

\qbezier(20,0)(20,10)(16,14)
\put(14,16){\line(-1,1){8}}
\put(4,26){\line(-1,1){4}}

{\allinethickness{.8pt}
\put(29,15){\vector(1,0){15}}
\put(44,15){\vector(-1,0){15}}}

\qbezier(50,0)(50,10)(55,15)
\put(55,15){\line(1,1){15}}

\put(60,0){\line(1,1){5}}
\qbezier(65,5)(75,15)(66,24)
\put(64,26){\line(-1,1){4}}

\put(70,0){\line(-1,1){4}}
\put(64,6){\line(-1,1){8}}
\qbezier(54,16)(50,20)(50,30)

\ifnum#2=2

\put(10,23){\makebox{${\Huge c_{1}}$}}
\put(10,3){\makebox{${\Huge c_{2}}$}}
\put(20,13){\makebox{${\Huge c_{3}}$}}


\put(70,23){\makebox{${\Huge c'_{2}}$}}
\put(70,3){\makebox{${\Huge c'_{1}}$}}
\put(60,13){\makebox{${\Huge c'_{3}}$}}
\fi

\end{picture}
}
\def\rmovevio#1#2{
\setlength{\unitlength}{#1}
\begin{picture}(50,30)
\put(5,0){\line(0,1){30}}

{\allinethickness{.8pt}
\put(10,15){\vector(1,0){15}}
\put(25,15){\vector(-1,0){15}}}

\qbezier(30,0)(30,20)(45,20)
\qbezier(45,20)(50,20)(50,15)
\qbezier(50,15)(50,10)(45,10)
\qbezier(45,10)(30,10)(30,30)

\put(34,15){\circle{5}}

\ifnum#2=2
\put(3,28){\path(0,0)(2,2)(4,0)}
\put(28,28){\path(0,0)(2,2)(4,0)}
\put(38,15){\makebox{${\Huge c_{1}}$}}
\fi

\end{picture}
}

\def\rmoveviio#1#2{
\setlength{\unitlength}{#1}
\begin{picture}(60,30)
\put(5,0){\line(0,1){30}}
\put(15,0){\line(0,1){30}}

{\allinethickness{.8pt}
\put(20,15){\vector(1,0){15}}
\put(35,15){\vector(-1,0){15}}}

\qbezier(40,0)(80,15)(40,30)
\qbezier(60,0)(20,15)(60,30)
\put(50,4){\circle{5}}
\put(50,25){\circle{5}}

\ifnum#2=2
\put(2,27){\path(0,0)(3,3)(6,0)}
\put(12,27){\path(0,0)(3,3)(6,0)}
\put(40,27){\path(0,0)(0,3)(3,3)}
\put(60,27){\path(0,0)(0,3)(-3,3)}
\put(47,15){\makebox{${\Huge c_{1}}$}}
\put(47,10){\makebox{${\Huge c_{2}}$}}
\fi

\ifnum#2=3
\put(2,2){\path(0,0)(3,-3)(6,0)}
\put(12,27){\path(0,0)(3,3)(6,0)}
\put(40,3){\path(0,0)(0,-3)(3,-3)}
\put(60,27){\path(0,0)(0,3)(-3,3)}
\put(47,15){\makebox{${\Huge c_{1}}$}}
\put(47,10){\makebox{${\Huge c_{2}}$}}
\fi

\end{picture}
}

\def\rmoveviiio#1#2{
\setlength{\unitlength}{#1}
\begin{picture}(70,30)
\put(0,0){\line(1,1){15}}
\qbezier(15,15)(20,20)(20,30)

\put(10,0){\line(-1,1){5}}
\qbezier(5,5)(-5,15)(5,25)
\put(5,25){\line(1,1){5}}

\qbezier(20,0)(20,10)(15,15)
\put(15,15){\line(-1,1){15}}

\put(5,5){\circle{5}}
\put(15,15){\circle{5}}
\put(5,25){\circle{5}}

{\allinethickness{.8pt}
\put(28,15){\vector(1,0){14}}
\put(42,15){\vector(-1,0){14}}}

\qbezier(50,0)(50,10)(55,15)
\put(55,15){\line(1,1){15}}

\put(60,0){\line(1,1){5}}
\qbezier(65,5)(75,15)(65,25)
\put(65,25){\line(-1,1){5}}

\put(70,0){\line(-1,1){15}}
\qbezier(55,15)(50,20)(50,30)

\put(65,5){\circle{5}}
\put(55,15){\circle{5}}
\put(65,25){\circle{5}}

\ifnum#2=2
\put(0,27){\path(0,0)(0,3)(3,3)}
\put(7,30){\path(0,0)(3,0)(3,-3)}
\put(17,27){\path(0,0)(3,3)(6,0)}

\put(20,13){\makebox{${\Huge c_{1}}$}}

\put(47,27){\path(0,0)(3,3)(6,0)}
\put(60,27){\path(0,0)(0,3)(3,3)}
\put(67,30){\path(0,0)(3,0)(3,-3)}

\put(60,13){\makebox{${\Huge c'_{1}}$}}
\fi

\end{picture}
}

\def\rmovevivo#1#2{
\setlength{\unitlength}{#1}
\begin{picture}(70,30)
\put(0,0){\line(1,1){15}}
\qbezier(15,15)(20,20)(20,30)

\put(10,0){\line(-1,1){4}}
\qbezier(4,6)(-5,15)(5,25)
\put(5,25){\line(1,1){5}}

\qbezier(20,0)(20,10)(16,14)
\put(16,14){\line(-1,1){16}}

\put(15,15){\circle{5}}
\put(5,25){\circle{5}}

{\allinethickness{.8pt}
\put(28,15){\vector(1,0){14}}
\put(42,15){\vector(-1,0){14}}}

\qbezier(50,0)(50,10)(55,15)
\put(55,15){\line(1,1){9}}
\put(66,26){\line(1,1){4}}

\put(60,0){\line(1,1){5}}
\qbezier(65,5)(75,15)(65,25)
\put(65,25){\line(-1,1){5}}

\put(70,0){\line(-1,1){16}}
\qbezier(54,16)(50,20)(50,30)

\put(65,5){\circle{5}}
\put(55,15){\circle{5}}

\ifnum#2=2
\put(0,27){\path(0,0)(0,3)(3,3)}
\put(7,30){\path(0,0)(3,0)(3,-3)}
\put(17,27){\path(0,0)(3,3)(6,0)}

\put(20,13){\makebox{${\Huge c_{1}}$}}

\put(47,27){\path(0,0)(3,3)(6,0)}
\put(60,27){\path(0,0)(0,3)(3,3)}
\put(67,30){\path(0,0)(3,0)(3,-3)}

\put(60,13){\makebox{${\Huge c'_{1}}$}}
\fi

\end{picture}
}
\def\rmoveti#1{
\setlength{\unitlength}{#1}
\begin{picture}(60,20)
\put(0,10){\line(1,0){20}}
\put(15,0){\line(0,1){20}}
\put(15,10){\circle{3}}
{
\put(7,7){\line(0,1){6}}} 
{\allinethickness{.8pt}
\put(25,10){\vector(1,0){10}}
\put(35,10){\vector(-1,0){10}}}

\put(40,10){\line(1,0){20}}
\put(45,0){\line(0,1){20}}
\put(45,10){\circle{3}}
{
\put(53,7){\line(0,1){6}}}
\end{picture}
}

\def\rmovetiio#1{
\setlength{\unitlength}{#1}
\begin{picture}(40,20)
\put(5,0){\line(0,1){20}}
{
\put(2.,7){\line(1,0){6}} 
\put(2.,13){\line(1,0){6}} }
{\allinethickness{.8pt}
\put(15,10){\vector(1,0){10}}
\put(25,10){\vector(-1,0){10}}}
\put(35,0){\line(0,1){20}}
\end{picture}
}

\def\rmovetiiio#1#2{
\setlength{\unitlength}{#1}
\begin{picture}(70,20)

\put(0,0){\line(1,1){20}}
\put(20,0){\line(-1,1){9}}
\put(9,11){\line(-1,1){9}}

\put(7,3){\line(-1,1){4}}
\put(13,3){\line(1,1){4}}
\put(3,13){\line(1,1){4}}
\put(17,13){\line(-1,1){4}}

{\allinethickness{.8pt}
\put(25,10){\vector(1,0){10}}
\put(35,10){\vector(-1,0){10}}}

\qbezier(40,5)(48,20)(54,11)
\qbezier(56, 9)(62,0)(70,15)

\qbezier(40,15)(48,0)(55,10)
\qbezier(55,10)(62,20)(70,5)

\put(43,10){\circle{5}}
\put(67,10){\circle{5}}

\ifnum#2=2

\put(15,8){\makebox{${\Huge c_{1}}$}}


\put(53,0){\makebox{${\Huge c'_{1}}$}}
\fi

\end{picture}
}

\def\rmovevfo#1#2{
\setlength{\unitlength}{#1}
\begin{picture}(70,30)
\put(0,0){\line(1,1){15}}
\qbezier(15,15)(20,20)(20,30)

\put(10,0){\line(-1,1){4}}
\qbezier(4,6)(-5,15)(5,25)
\put(5,25){\line(1,1){5}}

\qbezier(20,0)(20,10)(16,14)
\put(14,16){\line(-1,1){14}}

\put(5,25){\circle{5}}

{\allinethickness{.8pt}
\put(28,15){\vector(1,0){14}}
\put(42,15){\vector(-1,0){14}}}

\qbezier(50,0)(50,10)(55,15)
\put(55,15){\line(1,1){15}}

\put(60,0){\line(1,1){5}}
\qbezier(65,5)(75,15)(66,24)
\put(64,26){\line(-1,1){4}}

\put(70,0){\line(-1,1){14}}
\qbezier(54,16)(50,20)(50,30)

\put(65,5){\circle{5}}

\ifnum#2=2
\put(0,27){\path(0,0)(0,3)(3,3)}
\put(7,30){\path(0,0)(3,0)(3,-3)}
\put(17,27){\path(0,0)(3,3)(6,0)}

\put(20,13){\makebox{${\Huge c_{1}}$}}

\put(47,27){\path(0,0)(3,3)(6,0)}
\put(60,27){\path(0,0)(0,3)(3,3)}
\put(67,30){\path(0,0)(3,0)(3,-3)}

\put(60,13){\makebox{${\Huge c'_{1}}$}}
\fi

\end{picture}
}

\def\alexnum#1{
\setlength{\unitlength}{#1}
\begin{picture}(90,20)

\put(5,0){\line(1,1){20}}
\put(25,0){\line(-1,1){9}}
\put(14,11){\line(-1,1){9}}

\put(5,0){\path(0,3)(0,0)(3,0)}
\put(25,0){\path(-3,0)(0,0)(0,3)}

\put(0,15){\makebox{{$i$}}}
\put(0,0){\makebox{{$i$}}}
\put(27,15){\makebox{{$i+1$}}}
\put(27,0){\makebox{{$i+1$}}}

\put(55,0){\line(1,1){9}}
\put(75,0){\line(-1,1){20}}
\put(66,11){\line(1,1){9}}

\put(55,0){\path(0,3)(0,0)(3,0)}
\put(75,0){\path(-3,0)(0,0)(0,3)}

\put(50,15){\makebox{{$i$}}}
\put(50,0){\makebox{{$i$}}}
\put(77,15){\makebox{{$i+1$}}}
\put(77,0){\makebox{{$i+1$}}}

\end{picture}
}
\def\alexnumv#1{
\setlength{\unitlength}{#1}
\begin{picture}(35,20)

\put(5,0){\line(1,1){20}}
\put(25,0){\line(-1,1){20}}
\put(15,10){\circle{4}}

\put(5,0){\path(0,3)(0,0)(3,0)}
\put(25,0){\path(-3,0)(0,0)(0,3)}

\put(0,15){\makebox{{$j$}}}
\put(0,0){\makebox{{$i$}}}
\put(27,15){\makebox{{$i$}}}
\put(27,0){\makebox{{$j$}}}

\end{picture}
}
\def\orientedcutpt#1{
\setlength{\unitlength}{#1}
\begin{picture}(20,20)
\put(5,0){\line(0,1){20}}
\put(5.,8){\path(0,0)(2,2)(-2,2)(0,0)} 

\end{picture}
}
\def\alexnumcp#1{
\setlength{\unitlength}{#1}
\begin{picture}(20,20)
\put(5,0){\line(0,1){20}}
\put(5.,8){\path(0,0)(2,2)(-2,2)(0,0)} 

\put(7,13){\makebox{$i$}}
\put(7,3){\makebox{$i+1$}}
\end{picture}
}
\def\canocutsysic#1#2{
\setlength{\unitlength}{#1}
\begin{picture}(130,20)

\put(5,0){\line(1,1){20}}
\put(25,0){\line(-1,1){8}}
\put(5,20){\line(1,-1){8}}

\put(20,15){\path(0,0)(3,0)(0,3)(0,0)}
\put(20,5){\path(0,0)(3,0)(0,-3)(0,0)}

\put(5,0){\path(0,3)(0,0)(3,0)}
\put(25,0){\path(-3,0)(0,0)(0,3)}

\put(50,0){\line(1,1){8}}
\put(70,20){\line(-1,-1){8}}
\put(70,0){\line(-1,1){20}}

\put(65,15){\path(0,0)(3,0)(0,3)(0,0)}
\put(65,5){\path(0,0)(3,0)(0,-3)(0,0)}

\put(50,0){\path(0,3)(0,0)(3,0)}
\put(70,0){\path(-3,0)(0,0)(0,3)}

\ifnum#2=2
\put(0,15){\makebox{{\small $0$}}}
\put(0,0){\makebox{{\small $0$}}}
\put(27,15){\makebox{{\small $0$}}}
\put(27,0){\makebox{{\small $0$}}}
\put(15,14){\makebox{{\small $1$}}}
\put(15,1){\makebox{{\small $1$}}}
\put(45,15){\makebox{{\small $0$}}}
\put(45,0){\makebox{{\small $0$}}}
\put(72,15){\makebox{{\small $0$}}}
\put(72,0){\makebox{{\small $0$}}}
\put(60,14){\makebox{{\small $1$}}}
\put(60,1){\makebox{{\small $1$}}}

\fi

\put(95,0){\line(1,1){20}}
\put(115,0){\line(-1,1){20}}
\put(105,10){\circle{4}}


\put(95,0){\path(0,3)(0,0)(3,0)}
\put(115,0){\path(-3,0)(0,0)(0,3)}

\ifnum#2=2
\put(90,15){\makebox{{\small $0$}}}
\put(90,0){\makebox{{\small $0$}}}
\put(117,15){\makebox{{\small $0$}}}
\put(117,0){\makebox{{\small $0$}}}
\fi

\end{picture}
}
\def\ocpmovei#1{
\setlength{\unitlength}{#1}
\begin{picture}(60,20)
\put(0,0){\line(1,1){20}}
\put(20,0){\line(-1,1){20}}
\put(10,10){\circle{5}}
\put(15,15){\path(0,0)(3,0)(0,3)(0,0)}

{\allinethickness{.8pt}
\put(25,10){\vector(1,0){10}}
\put(35,10){\vector(-1,0){10}}}

\put(40,0){\line(1,1){20}}
\put(60,0){\line(-1,1){20}}
\put(50,10){\circle{5}}
\put(43,3){\path(0,0)(3,0)(0,3)(0,0)}
\end{picture}
}

\def\ocpmoveii#1{
\setlength{\unitlength}{#1}
\begin{picture}(50,20)

\put(5,0){\line(0,1){20}}
{\allinethickness{.8pt}

\put(10,10){\vector(1,0){15}}
\put(25,10){\vector(-1,0){15}}}

\put(30,0){\line(0,1){20}}
\put(30.,5){\path(0,0)(2,2)(-2,2)(0,0)} 
\put(30,15){\path(0,0)(2,-2)(-2,-2)(0,0)}

\put(35,7){\makebox{{or}}}

\put(47,0){\line(0,1){20}}
\put(47.,7){\path(0,0)(2,-2)(-2,-2)(0,0)} 
\put(47.,13){\path(0,0)(2,2)(-2,2)(0,0)}

\end{picture}
}

\def\ocpmoveiiia#1#2{
\setlength{\unitlength}{#1}
\begin{picture}(60,20)

\put(0,0){\line(1,1){20}}
\put(20,0){\line(-1,1){9}}
\put(9,11){\line(-1,1){9}}

{\allinethickness{.8pt}
\put(25,10){\vector(1,0){10}}
\put(35,10){\vector(-1,0){10}}}

\put(40,0){\line(1,1){20}}
\put(60,0){\line(-1,1){9}}
\put(49,11){\line(-1,1){9}}

\put(43,3){\path(0,0)(3,0)(0,3)(0,0)}
\put(57,3){\path(0,0)(0,3)(-3,0)(0,0)}
\put(57,17){\path(0,0)(-3,0)(0,-3)(0,0)}
\put(43,17){\path(0,0)(0,-3)(3,0)(0,0)}

\ifnum#2=2
\put(14,9){\makebox{{$c_1$}}}
\put(54,9){\makebox{{$c_1$}}}
\fi

\end{picture}
}
\def\ocpmoveiiib#1#2{
\setlength{\unitlength}{#1}
\begin{picture}(60,20)

\put(0,0){\line(1,1){20}}
\put(20,0){\line(-1,1){9}}
\put(9,11){\line(-1,1){9}}

{\allinethickness{.8pt}
\put(25,10){\vector(1,0){10}}
\put(35,10){\vector(-1,0){10}}}

\put(40,0){\line(1,1){20}}
\put(60,0){\line(-1,1){9}}
\put(49,11){\line(-1,1){9}}

\put(45,5){\path(0,0)(-3,0)(0,-3)(0,0)}
\put(55,5){\path(0,0)(0,-3)(3,0)(0,0)}
\put(55,15){\path(0,0)(3,0)(0,3)(0,0)}
\put(45,15){\path(0,0)(0,3)(-3,0)(0,0)}

\ifnum#2=2
\put(14,9){\makebox{{$c_1$}}}
\put(54,9){\makebox{{$c_1$}}}
\fi

\end{picture}
}
\def\stnocutsysi#1#2{
\setlength{\unitlength}{#1}
\begin{picture}(130,20)

\put(5,0){\line(1,1){20}}
\put(25,0){\line(-1,1){8}}
\put(5,20){\line(1,-1){8}}


\put(5,0){\path(0,3)(0,0)(3,0)}
\put(25,0){\path(-3,0)(0,0)(0,3)}

\put(50,0){\line(1,1){8}}
\put(70,20){\line(-1,-1){8}}
\put(70,0){\line(-1,1){20}}


\put(50,0){\path(0,3)(0,0)(3,0)}
\put(70,0){\path(-3,0)(0,0)(0,3)}

\ifnum#2=2
\put(0,15){\makebox{{\small $i$}}}
\put(0,0){\makebox{{\small $i$}}}
\put(27,15){\makebox{{\small $i+1$}}}
\put(27,0){\makebox{{\small $i+1$}}}
\put(45,15){\makebox{{\small $j$}}}
\put(45,0){\makebox{{\small $j$}}}
\put(72,15){\makebox{{\small $j+1$}}}
\put(72,0){\makebox{{\small $j+1$}}}

\fi

\put(95,0){\line(1,1){20}}
\put(115,0){\line(-1,1){20}}
\put(105,10){\circle{4}}

\put(100,5){\path(0,0)(-3,0)(0,-3)(0,0)}
\put(112,3){\path(0,0)(-3,0)(0,3)(0,0)}

\put(95,0){\path(0,3)(0,0)(3,0)}
\put(115,0){\path(-3,0)(0,0)(0,3)}

\ifnum#2=2
\put(90,15){\makebox{{\small $k$}}}
\put(90,0){\makebox{{$\small k$}}}
\put(117,15){\makebox{{\small $k+1$}}}
\put(117,0){\makebox{{\small $k+1$}}}
\fi


%
%
%
%
%

\end{picture}
}




\def\smoothLR#1{
\setlength{\unitlength}{#1}
\begin{picture}(170,50)
  \thicklines
\put(0,15){\line(1,1){20}}
\put(0,35){\line(1,-1){8}}
\put(20,15){\line(-1,1){8}}
\put(0,15){\path(0,5)(0,0)(5,0)}
\put(20,15){\path(-5,0)(0,0)(0,5)}
\put(30,15){\line(1,1){8}}
\put(50,35){\line(-1,-1){8}}
\put(30,35){\line(1,-1){20}}
\put(30,15){\path(0,5)(0,0)(5,0)}
\put(50,15){\path(-5,0)(0,0)(0,5)}

\put(0,5){\makebox{a classical crossing $c$}}
%
%
\put(60,25){\vector(1,0){15}}
%
\qbezier(115,15)(125,25)(115,35)
\qbezier(135,15)(125,25)(135,35)

\qbezier[10](115,15)(105,5)(95,15)
\qbezier[10](115,35)(105,45)(95,35)
\qbezier[10](95,15)(85,25)(95,35)

\qbezier[10](135,15)(145,5)(155,15)
\qbezier[10](135,35)(145,45)(155,35)
\qbezier[10](155,15)(165,25)(155,35)

\put(107,23){\makebox{$D_c^{\rm R}$}}
\put(133,23){\makebox{$D_c^{\rm L}$}}

\put(115,15){\path(0,5)(0,0)(5,0)}
\put(135,15){\path(-5,0)(0,0)(0,5)}

\put(120,0){\makebox{$D_c$}}
%
%
%
%
%
%
\end{picture}
}

\def\smoothOUpositive#1{
\setlength{\unitlength}{#1}
\begin{picture}(170,50)
  \thicklines
\put(30,15){\line(1,1){20}}
\put(30,35){\line(1,-1){8}}
\put(50,15){\line(-1,1){8}}
\put(30,15){\path(0,5)(0,0)(5,0)}
\put(50,15){\path(-5,0)(0,0)(0,5)}

\put(0,5){\makebox{a positive crossing $c$}}

\put(60,25){\vector(1,0){15}}

\qbezier(115,15)(125,25)(115,35)
\qbezier(135,15)(125,25)(135,35)

\qbezier[10](115,15)(105,5)(95,15)
\qbezier[10](115,35)(105,45)(95,35)
\qbezier[10](95,15)(85,25)(95,35)

\qbezier[10](135,15)(145,5)(155,15)
\qbezier[10](135,35)(145,45)(155,35)
\qbezier[10](155,15)(165,25)(155,35)

\put(107,23){\makebox{$D_c^{\rm O}$}}
\put(133,23){\makebox{$D_c^{\rm U}$}}

\put(115,15){\path(0,5)(0,0)(5,0)}
\put(135,15){\path(-5,0)(0,0)(0,5)}

\put(120,0){\makebox{$D_c$}}

\end{picture}
}

\def\smoothOUnegative#1{
\setlength{\unitlength}{#1}
\begin{picture}(170,50)
  \thicklines
\put(30,15){\line(1,1){8}}
\put(50,35){\line(-1,-1){8}}
\put(30,35){\line(1,-1){20}}
\put(30,15){\path(0,5)(0,0)(5,0)}
\put(50,15){\path(-5,0)(0,0)(0,5)}

\put(0,5){\makebox{a negative crossing $c$}}

\put(60,25){\vector(1,0){15}}

\qbezier(115,15)(125,25)(115,35)
\qbezier(135,15)(125,25)(135,35)

\qbezier[10](115,15)(105,5)(95,15)
\qbezier[10](115,35)(105,45)(95,35)
\qbezier[10](95,15)(85,25)(95,35)

\qbezier[10](135,15)(145,5)(155,15)
\qbezier[10](135,35)(145,45)(155,35)
\qbezier[10](155,15)(165,25)(155,35)

\put(107,23){\makebox{$D_c^{\rm U}$}}
\put(133,23){\makebox{$D_c^{\rm O}$}}

\put(115,15){\path(0,5)(0,0)(5,0)}
\put(135,15){\path(-5,0)(0,0)(0,5)}

\put(120,0){\makebox{$D_c$}}

\end{picture}
}


\def\indloopi#1#2{
\setlength{\unitlength}{#1}
\begin{picture}(15,10)
\ifnum#2=1
\put(0,5){\line(1,0){15}}
\put(4,5){\line(0,1){3}}
\put(11,5){\line(0,1){3}}
\fi
\ifnum#2=2
\put(0,5){\line(1,0){15}}
\fi
\end{picture}
}
\def\indloopii#1#2{
\setlength{\unitlength}{#1}
\begin{picture}(15,15)
\ifnum#2=1
\put(5,0){\line(0,1){15}}
\put(0,7){\line(1,0){15}}
\put(10,7){\line(0,1){3}}
\put(2.5,6){\makebox{$\bigcirc$}}
\fi
\ifnum#2=2
\put(8,0){\line(0,1){15}}
\put(3,7){\line(0,1){3}}
\put(0,7){\line(1,0){15}}
\put(5.5,6){\makebox{$\bigcirc$}}
\fi
\end{picture}
}

\def\indloopiii#1#2{
\setlength{\unitlength}{#1}
\begin{picture}(17,10)
\ifnum#2=1
{\linethickness{2pt}
\put(8,2){\line(0,1){6}}}
\put(0,5){\line(1,0){15}}
\put(12,5){\line(0,1){3}}
\fi
\ifnum#2=2
{\linethickness{2pt}
\put(8,2){\line(0,1){6}}}
\put(4,5){\line(0,-1){3}}
\put(0,5){\line(1,0){15}}
\fi
\end{picture}
}























\begin{document}
\title[Invariants of virtual links and twisted links using affine indices]
{Invariants of virtual links and twisted links using affine indices}

\author{Naoko Kamada}

\address{Graduate School of Science,  Nagoya City University\\ 
1 Yamanohata, Mizuho-cho, Mizuho-ku, Nagoya, Aichi 467-8501 Japan
}

\author{Seiichi Kamada}

\address{Department of Mathematics, Osaka University\\
Toyonaka,  Osaka 560-0043, Japan
}

\thanks{This work was supported by JSPS KAKENHI Grant Number 
23K03118 
and 
19H01788. 
}

\date{}

\begin{abstract} 
The affine index polynomial and the $n$-writhe are invariants of virtual knots which are introduced by Kauffman \cite{rkauF} and by 
Satoh and Taniguchi \cite{rST} independently. They are defined by using indices assigned to each classical crossing, which we call affine indices in this paper.  
We discuss a relationship between the invariants and generalize them to invariants of virtual links. The invariants for virtual links can be also computed by using cut systems. We also introduce invariants of twisted links by using affine indices.  
\end{abstract}

\keywords{Virtual knots; twisted knots; invariants; affine index}

\subjclass{Primary 57K12; Secondary 57K14} 


\maketitle


\section{Introduction}

Virtual links are a generalization of links introduced by Kauffman~\cite{rkauD}. They are defined as equivalence classes of link diagrams possibly with virtual crossings, called virtual link diagrams, and they correspond to equivalence classes of Gauss diagrams \cite{rkauD}. 
They are also in one-to-one correspondence with 
stable equivalence classes of  links in thickened oriented surfaces \cite{rCKS,rkk}, and with abstract links defined in \cite{rkk}.  

The affine index polynomial $P_K(t)$ and the $n$-writhe $J_n(K)$ $(n\neq 0)$ are invariants of virtual knots which are introduced by Kauffman \cite{rkauF} and by 
Satoh and Taniguchi \cite{rST} independently. They are defined by using indices assigned to each classical crossing, which we call affine indices in this paper.  Affine indices are a refinement of indices introduced by Henrich~\cite{rHen} and Im, Lee and Lee~\cite{rILL}. 
The odd writhe invariant \cite{rkauE}, the odd writhe polynomial \cite{rCheng2014} and the invariants in  \cite{rHen} and \cite{rILL} are recovered from the affine index polynomial and the $n$-writhe.  
The writhe polynomial defined in \cite{rChengGao2013} is also another variation of the affine index polynomial and the $n$-writhe.  
The writhe polynomial for a virtual link is also introduced in Xu's paper~\cite{rXu}. 

One of the purposes of this paper is to discuss a relationship between the affine index polynomial and the $n$-writhe (Proposition~\ref{prop:STandP}) and generalize them to invariants of virtual links. 

For a virtual link diagram $D$, we define the over and under $n$-writhe,  
$J^{\rm O}_n(D)$ and $J^{\rm U}_n(D)$, for $n \neq 0$, and over, under and over-under affine index polynomials, 
$P^{\rm O}_D(t)$, $P^{\rm U}_D(t)$ and  
$P^{\rm OU}_D(s, t)$.  All of these are virtual link invariants (Theorem~\ref{virtuallinkinvariants}).

Kauffman~\cite{rKauffman2018} defined the affine index polynomial $P_L(t)$ for a {\it compatible} virtual link. 
For a compatible virtual link, our invariants $P^{\rm O}_L(t)$ and $P^{\rm U}_L(t)$ are essentially the same with $P_L(t)$ (Remark~\ref{rem:compatible}).

Another purpose is to give an alternative definition to these invariants, or an alternative method of computing these invariants, using cut systems. 
A cut system of a virtual link diagram $D$ is a finite set of oriented cut points such that the diagram with the cut points admits an Alexander numbering.  Cut systems are used for constructing a checkerboard colorable, a mod-$m$ almost classical or almost classical virtual link diagram from a given virtual link diagram (cf. \cite{rkn2017, rkn2018, rkn2019}). Oriented cut points are a generalization of (unoriented) cut points introduced by Dye~\cite{rdye2, rdye2017}.  

Twisted links are also a generalization of links introduced by Bourgoin~\cite{rbor}.  They are defined as equivalence classes of link diagrams possibly with virtual crossings and bars, called twisted link diagrams.  
They are in one-to-one correspondence with 
stable equivalence classes of  links in thickened surfaces, and with abstract links over unoriented surfaces.  
We introduce invariants of twisted links using affine indices via the double covering method introduced in \cite{rkk16}.   
In the last section, we introduce another invariant of twisted links using affine indices.

This paper is organized as follows: 
In Section~\ref{sect:virtualknot} we recall the notion of virtual knots and links. 
In~Section~\ref{sect:defmain} we recall the $n$-writhe and the affine index polynomial for a virtual knot, and observe a relationship between them. 
In~Section~\ref{sect:links} we generalize the invariants to virtual link invariants.
In~Section~\ref{sect:cut} we recall the notions of Alexander numberings, oriented cut points and cut systems. Then 
we discuss an alternative definition of the invariants for virtual knots and links in terms of cut systems.
In Section~\ref{sect:twisted} we recall the definition of twisted knots and links, and construct invariants of twisted links via the double covering method. 
In Section~\ref{sect:newtwistedinv} another invariant of twisted links is defined by using the affine indices modulo $2$. 

We would like to thank the referee for their valuable comments and suggestions, which have significantly improved the quality of this paper. In particular, Remarks~\ref{rem:Xu} and~\ref{rem:linking} were suggested by the referee.

\section{Virtual knots and links}\label{sect:virtualknot} 

A {\it virtual link diagram\/} is a collection of immersed oriented loops in $\mathbb{R}^2$ such that the multiple points are transverse double points which are classified into classical crossings and virtual crossings: 
A {\it classical crossing} is a crossing with over/under information as in usual link diagrams, and a {\it virtual crossing} is a crossing without over/under information \cite{rkauD}.  A virtual crossing is depicted as a crossing encircled with a small circle in oder to distinguish from classical crossings. (Such a circle is not considered as a component of the virtual link diagram.)  A classical crossing is also called a positive or negative crossing according to the sign of the crossing as usual in knot theory.  A {\it virtual knot diagram\/} is a virtual link  diagram with one component. 

{\it Generalized Reidemeister moves} are local moves depicted in Figure~ \ref{fgmoves}: The 3 moves on the top are {\it (classical) Reidemeister moves} and the 4 moves on the bottom are so-called {\it virtual Reidemeister moves}. 
Two virtual link diagrams are said to be {\it equivalent} if they are related by a finite sequence of generalized Reidemeister moves and isotopies of $\mathbb{R}^2$.  
A {\it virtual link\/}   is an equivalence class 
of virtual link diagrams.  A {\it virtual knot\/} is an equivalence class of a virtual knot diagram. 

\vspace{0.3cm}
\begin{figure}[h]
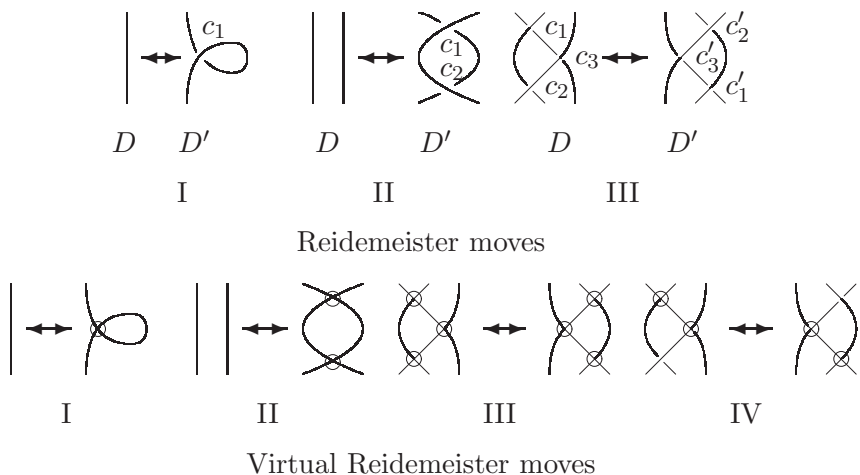

\centerline{
\begin{tabular}{ccc}
\rmoveio{.4mm}{2}&\rmoveiio{.4mm}{2}&\rmoveiiio{.4mm}{2}\\
$D$\phantom{Mk}$D'$\phantom{Mk}&$D$\phantom{MMM}$D'$&$D$\phantom{MMMk}$D'$\\
I&II&III\\
\multicolumn{3}{c}{Reidemeister moves}
\end{tabular}}
\vspace{0.2cm}
\centerline{
\begin{tabular}{cccc}
\rmovevio{.4mm}{1}&\rmoveviio{.4mm}{1}&\rmoveviiio{.4mm}{1}&\rmovevivo{.4mm}{1}\\
I&II&III&IV\\
\multicolumn{4}{c}{Virtual Reidemeister moves}
\end{tabular}}
\caption{Generalized Reidemeister moves}\label{fgmoves}
\end{figure}
%

\section{The $n$-writhe and the affine index polynomial of a virtual knot}\label{sect:defmain}

In this section, we recall two kinds of invariants of virtual knots, 
the $n$-writhe $J_n(K)$ $(n \neq 0)$ due to Satoh and Taniguchi \cite{rST} and the affine index polynomial $P_K(t)$ due to Kauffman \cite{rkauF}.  These two invariants were introduced independently, and it turns out that they are essentially the same invariant.  

Let $D$ be a virtual knot diagram and $c$ a classical crossing.  
The sign of $c$ is denoted by ${\rm sgn}(c)$. 
Let $D_c$  be a virtual link diagram obtained from $D$ by smoothing at $c$.

The {\it left\/} (or {\it right\/}) {\it counting component\/} for $D$ at $c$ means a component of $D_c$ which is 
denoted by $D_c^{\rm L}$ (or $D_c^{\rm R}$) in Figure~\ref{fig:smooth}.  

\begin{figure}[h]
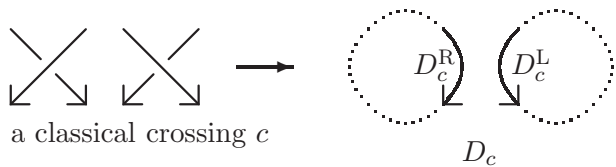

\begin{center}
\smoothLR{.5mm}
\end{center}
\caption{Counting components $D_c^{\rm L}$ and $D_c^{\rm R}$}\label{fig:smooth}
\end{figure}

The {\it over counting component\/} and the {\it under counting component\/} for $D$ at $c$ mean components of $D_c$ which are  
denoted by $D_c^{\rm O}$ and $D_c^{\rm U}$ in Figures~\ref{fig:smoothOUpositive} and  \ref{fig:smoothOUnegative} according to the sign of $c$. In other words,  
$$ 
\begin{array}{rrl}
D_c^{\rm O} = D_c^{\rm R}, & \quad  D_c^{\rm U}= D_c^{\rm L}     & \qquad ({\rm sgn}(c) =+1), \\
D_c^{\rm O} = D_c^{\rm L}, & \quad  D_c^{\rm U}= D_c^{\rm R}     & \qquad ({\rm sgn}(c) =-1). 
\end{array}
$$

\begin{figure}[h]
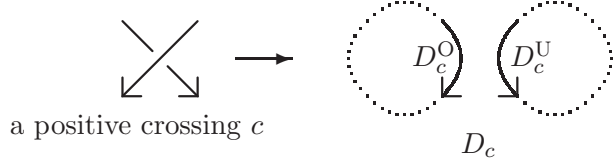

\begin{center}
\smoothOUpositive{.5mm}
\end{center}
\caption{Counting components $D_c^{\rm O}$ and $D_c^{\rm U}$ for positive $c$}\label{fig:smoothOUpositive}
\end{figure}

\begin{figure}[h]
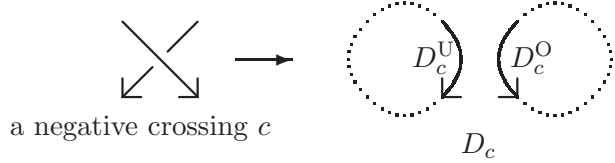

\begin{center}
\smoothOUnegative{.5mm}
\end{center}
\caption{Counting components $D_c^{\rm O}$ and $D_c^{\rm U}$ for negative $c$}\label{fig:smoothOUnegative}
\end{figure}

Let $D_c^{\ast}$ $(\ast \in \{ {\rm L}, {\rm R}, {\rm O}, {\rm U} \})$ be a counting component for $D$ at $c$. 
Let $\gamma_1, \dots, \gamma_k$ be the sequence of classical crossings of $D$ which appear on $D_c^{\ast}$ 
in this order when we go along $D_c^{\ast}$ from a point near $c$ and come back to the point. 
Here if there is a self-crossing of $D_c^{\ast}$, then it appears twice in the sequence $\gamma_1, \dots, \gamma_k$.

We define the {\it flat sign} at $\gamma_i$, denoted by $\iota(\gamma_i, D_c^{\ast})$, as follows. 
$$\iota(\gamma_i, D_c^{\ast})=\left\{
\begin{array}{rl}
{\rm sgn}(\gamma_i)&\text{ if $\gamma_i$ is an under crossing, }\\
-{\rm sgn}(\gamma_i)&\text{ if $\gamma_i$ an over crossing. }
\end{array}\right.
$$

In other words, the {\it flat sign} is $+1$ (or $-1$) if 
 the other arc passing through the classical crossing is oriented from left to right (or right to left) with respect to the orientation of $D_c^{\ast}$.

The {\it left\/}, {\it right\/}, {\it over \/} or {\it under\/} {\it affine index\/} of $D$ at a classical crossing $c$, which we denote by 
${\rm ind}^{\rm L}(c, D)$, ${\rm ind}^{\rm R}(c, D)$,  ${\rm ind}^{\rm O}(c, D)$ or ${\rm ind}^{\rm U}(c, D)$) respectively, is defined by 
$$ {\rm ind}^{\ast}(c, D) = \sum_i \iota(\gamma_i, D_c^{\ast}) \quad \quad 
(\ast \in \{ {\rm L}, {\rm R}, {\rm O}, {\rm U}  \}). $$ 

The {\it index\/} of a classical crossing $c$ in the sense of Satoh and Taniguchi, which we denote by ${\rm Ind}^{\rm ST}(c)$ or ${\rm Ind}^{\rm ST}(c, D)$, is defined by  
$$ 
{\rm Ind}^{\rm ST}(c) =
{\rm ind}^{\rm O}(c, D).
$$

For each  non-zero integer $n$, 
the {\it $n$-writhe\/} $J_n(D)$ of $D$ is defined by 
$$J_n(D)=\sum_{c \in X(D) ~:~ {\rm Ind}^{\rm ST}(c)=n} {\rm sgn}(c),$$ 
where $c$ runs over the set of classical crossings $X(D)$ with ${\rm Ind}^{\rm ST}(c)=n$. 

\begin{thm}[\cite{rST}]
If $D$ and $D'$ represent the same virtual knot, then $J_n(D)= J_n(D')$ for each $n \neq 0$. 
\end{thm}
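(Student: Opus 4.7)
My strategy is to verify invariance of $J_n(D)$ under each of the seven generalized Reidemeister moves. Since $J_n(D) = \sum_{c : {\rm Ind}^{\rm ST}(c) = n} {\rm sgn}(c)$, it suffices to track, move by move, how the set of classical crossings and their signs change, and how the over-index ${\rm Ind}^{\rm ST}(c) = {\rm ind}^{\rm O}(c, D)$ of each classical crossing changes.

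For the four virtual Reidemeister moves, the set of classical crossings and their signs are untouched. Moreover, for each classical crossing $c$, the over counting component $D_c^{\rm O}$ is modified only by rerouting through or around virtual crossings, so the sequence of classical crossings encountered along $D_c^{\rm O}$, together with their flat signs, is unchanged. Hence ${\rm Ind}^{\rm ST}(c)$ is preserved for every $c$.

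For the classical Reidemeister I move, a kink introduces one new classical crossing $c_0$. A direct inspection of the four local variants of the kink shows that $D_{c_0}^{\rm O}$ meets no classical crossing other than $c_0$, so ${\rm Ind}^{\rm ST}(c_0) = 0$ and $c_0$ does not contribute to $J_n$ for $n \neq 0$. For any other classical crossing $c$, the small loop of the kink lies entirely in one of the two components obtained by smoothing at $c$; if it lies in $D_c^{\rm O}$, then $c_0$ is encountered twice along the traversal, once as an over crossing and once as an under crossing, so the flat signs $-{\rm sgn}(c_0)$ and $+{\rm sgn}(c_0)$ cancel; otherwise $c_0$ does not appear in the sum at all. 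In either case ${\rm Ind}^{\rm ST}(c)$ is unchanged. For the classical Reidemeister II move, two new crossings $c_1, c_2$ of opposite sign are introduced; a local check shows ${\rm Ind}^{\rm ST}(c_1) = {\rm Ind}^{\rm ST}(c_2)$, so their contributions to $J_n$ cancel, and for any crossing $c$ outside the bigon the same dichotomy as in the type I case applies to each of $c_1, c_2$, so their contributions to ${\rm ind}^{\rm O}(c, D)$ cancel or vanish. Finally, the classical Reidemeister III move preserves the signs of the three participating crossings, and a case analysis of all local sign and orientation patterns shows that the over-index of each of the three is unchanged; crossings outside the triangle have their counting components combinatorially altered only in a neighborhood of the move, in a way that preserves the sequence of flat signs.

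\textbf{Main obstacle.} The most delicate part is the case analysis for the Reidemeister II and III moves. For type II one must distinguish the parallel from the anti-parallel orientations of the two strands and consider both choices of which strand passes over. For type III one must enumerate the sign assignments and strand orientations consistent with the move, and in each sub-case identify the over counting components of each participating crossing before and after the move and verify that the sequence of flat signs they record is preserved. These verifications are local and purely combinatorial, but require careful bookkeeping of orientations and flat signs.
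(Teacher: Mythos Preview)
The paper does not prove this theorem itself; it is quoted from \cite{rST}. However, the paper does prove the generalization to virtual links (Theorem~\ref{virtuallinkinvariants}), and that proof follows exactly the move-by-move strategy you describe: virtual Reidemeister moves are dismissed immediately, and for each classical Reidemeister move one checks that the affine indices of surviving crossings are unchanged while the new crossings either have vanishing contribution (type I) or cancel in pairs (type II) or correspond bijectively with matching indices and signs (type III). Your proposal is correct and essentially identical in approach.

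One small caution on your type I argument: you assert that $D_{c_0}^{\rm O}$ is always the small loop, but depending on the sign of the kink and its chirality it can be $D_{c_0}^{\rm U}$ that is the small loop instead. The conclusion ${\rm Ind}^{\rm ST}(c_0)=0$ still holds, since for a virtual knot diagram one has ${\rm ind}^{\rm O}(c)=-{\rm ind}^{\rm U}(c)$ (the lemma quoted just after the definition of $P_D(t)$), so whichever counting component is the small loop forces both indices to vanish. The paper's version of this step simply says ``${\rm ind}^{\rm O}(c_1)=0$ or ${\rm ind}^{\rm U}(c_1)=0$'' and leaves the rest implicit.
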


For a virtual knot $K$ represented by a diagram $D$, the {\it $n$-writhe} $J_n(K)$ of $K$ is defined by $J_n(D)$,  \cite{rST}.  

The affine index polynomial was introduced by Kauffman in \cite{rkauF}.  For a classical crossing $c$ of $D$, 
two integers $W_-(c)$ and  $W_+(c)$ are defined by 
$$ W_+(c) = {\rm ind}^{\rm L}(c, D) \quad \text{and} \quad  W_-(c) = {\rm ind}^{\rm R}(c, D). $$ 
Then $W_D(c)$ is defined by $W_{{\rm sgn}(c)} (c)$, namely,  
$$ 
W_D(c) =
{\rm ind}^{\rm U}(c, D).
$$
The {\it affine index polynomial\/} of $D$ is defined by 
$$ P_D(t) = \sum_{c \in X(D)} {\rm sgn}(c) ( t^{W_D(c)} -1). $$ 

\begin{thm}[\cite{rkauF}]
If $D$ and $D'$ represent the same virtual knot, then $P_D(t)= P_{D'}(t)$. 
\end{thm}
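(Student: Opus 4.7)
The plan is to check that $P_D(t)$ is invariant under each of the seven generalized Reidemeister moves of Figure~\ref{fgmoves}; invariance under all seven implies $P_D(t) = P_{D'}(t)$ whenever $D$ and $D'$ represent the same virtual knot. Throughout, I would use the identity $W_D(c) = {\rm ind}^{\rm U}(c, D)$ together with the fact that a virtual crossing contributes nothing to any flat sign.

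For the four virtual Reidemeister moves, the set $X(D)$ of classical crossings and all their signs are unchanged; moreover, the sequence of flat signs read off $D_c^{\rm U}$ for every $c \in X(D)$ is preserved, since the arcs of $D_c^{\rm U}$ meet the same classical crossings in the same order. Hence each $W_D(c)$ is unchanged and $P_D(t) = P_{D'}(t)$ is immediate. For classical move RI, the new kink introduces a single classical crossing $c_0$ whose under-counting-component is a small loop meeting no other classical crossing; thus $W_D(c_0) = 0$ and its contribution ${\rm sgn}(c_0)(t^0 - 1)$ vanishes. For any other crossing $c$, the path $D_c^{\rm U}$ may enter the kink, but every such traversal passes through $c_0$ twice, once in the role of over and once in the role of under; since flat signs flip under the exchange of over and under, these two contributions at $c_0$ cancel and $W_D(c)$ is unchanged.

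For move RII, the two new crossings $c_1, c_2$ have opposite signs, and a direct inspection of the bigon shows that $D_{c_1}^{\rm U}$ and $D_{c_2}^{\rm U}$ meet the same other classical crossings with the same flat signs, so $W_D(c_1) = W_D(c_2)$ and the two contributions ${\rm sgn}(c_1)(t^{W_D(c_1)} - 1) + {\rm sgn}(c_2)(t^{W_D(c_2)} - 1)$ cancel. For any other crossing $c$, the passages of $D_c^{\rm U}$ through the bigon contribute flat signs at $c_1$ and $c_2$ that cancel by the same over/under pairing as in RI, so $W_D(c)$ is unchanged. Finally, for RIII the three classical crossings $c_1, c_2, c_3$ persist with the same signs, and a case analysis over the orientations of the three strands verifies that for each $i$ the under-counting-component $D_{c_i}^{\rm U}$ encounters the same classical crossings outside the RIII disk with the same flat signs, while the flat signs contributed by the two crossings of $\{c_1, c_2, c_3\}$ other than $c_i$ combine to the same total before and after the move.

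The main obstacle is the RIII case: although symmetries (strand reversal and planar reflection) cut down the orientation variants to a small number of representative diagrams, for each one must track carefully which arcs of $D$ lie on each $D_{c_i}^{\rm U}$ both before and after the move. A cleaner organization that would bypass much of this case-chase is to define an integer labeling of the arcs of $D$ that jumps by $\pm 1$ at each classical crossing and to recover $W_D(c)$ as the difference of two adjacent labels; the labeling transforms compatibly under all generalized Reidemeister moves by a short direct computation, and this viewpoint anticipates the cut-system description developed later in the paper.
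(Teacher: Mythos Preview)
Your approach is correct and matches the paper's own strategy: the paper cites this theorem from \cite{rkauF} without proof, but its proof of the link generalization (Theorem~\ref{virtuallinkinvariants}) is exactly the same move-by-move check that ${\rm ind}^{\rm O}(c)$ and ${\rm ind}^{\rm U}(c)$ are preserved for crossings away from the move and that the local crossings contribute trivially or in cancelling pairs. One small imprecision worth fixing: in RI the small loop need not be $D_{c_0}^{\rm U}$ --- depending on the sign and handedness of the kink it may instead be $D_{c_0}^{\rm O}$ --- but since ${\rm ind}^{\rm O}(c_0) = -{\rm ind}^{\rm U}(c_0)$ for a virtual knot diagram, one vanishing forces the other to vanish and your conclusion $W_D(c_0)=0$ survives; similarly, for other crossings $c$ the kink lies entirely on one of $D_c^{\rm O}, D_c^{\rm U}$, so $D_c^{\rm U}$ either meets $c_0$ twice with cancelling flat signs (as you say) or not at all.
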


For a virtual knot $K$ represented by a diagram $D$, the {\it affine index polynomial} $P_K(t)$ is defined by $P_D(t)$. 

\begin{lem}[\cite{rkauF,rST}]
For any classical crossing $c$ of a virtual knot diagram $D$,  
$${\rm ind}^{\rm L}(c, D) = - {\rm ind}^{\rm R}(c, D)$$
and
$${\rm ind}^{\rm O}(c, D) = - {\rm ind}^{\rm U}(c, D)$$
\end{lem}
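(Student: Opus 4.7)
The plan is to prove the first identity $\operatorname{ind}^{\rm L}(c, D) = - \operatorname{ind}^{\rm R}(c, D)$ by a direct cancellation argument over all classical crossings of $D$, and then deduce the second identity as a corollary via the definitions of $D_c^{\rm O}$ and $D_c^{\rm U}$.

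For the first identity, the key observation is that since $D$ is a knot diagram (one component), smoothing at $c$ produces exactly two components, namely $D_c = D_c^{\rm L} \cup D_c^{\rm R}$. The indices $\operatorname{ind}^{\rm L}(c,D)$ and $\operatorname{ind}^{\rm R}(c,D)$ are each a sum of flat signs $\iota(\gamma_i, D_c^{\ast})$ over the crossings $\gamma_i$ encountered when traversing the relevant component. I would organize the proof by cataloging, for each classical crossing $\gamma \neq c$ of $D$, how its two strands are distributed between $D_c^{\rm L}$ and $D_c^{\rm R}$, and showing that in every case the total contribution to $\operatorname{ind}^{\rm L}(c,D) + \operatorname{ind}^{\rm R}(c,D)$ is zero.

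There are three cases. If both strands of $\gamma$ lie on $D_c^{\rm L}$, then $\gamma$ appears twice in the sequence for $D_c^{\rm L}$: once as an over-crossing, contributing $-\operatorname{sgn}(\gamma)$, and once as an under-crossing, contributing $+\operatorname{sgn}(\gamma)$. These cancel, and $\gamma$ contributes $0$ to $\operatorname{ind}^{\rm R}(c,D)$. The symmetric case where both strands lie on $D_c^{\rm R}$ is analogous. If one strand lies on $D_c^{\rm L}$ and the other on $D_c^{\rm R}$, then $\gamma$ contributes once to each index; whichever strand is on $D_c^{\rm L}$, the flat signs contributed to the two indices are of opposite types (one the over contribution $-\operatorname{sgn}(\gamma)$, the other the under contribution $+\operatorname{sgn}(\gamma)$), and hence cancel in the sum. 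Summing over all $\gamma \neq c$ yields $\operatorname{ind}^{\rm L}(c,D) + \operatorname{ind}^{\rm R}(c,D) = 0$.

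For the second identity, I would simply invoke the definitions recalled before the lemma: depending on $\operatorname{sgn}(c)$, the pair $(D_c^{\rm O}, D_c^{\rm U})$ is either $(D_c^{\rm R}, D_c^{\rm L})$ or $(D_c^{\rm L}, D_c^{\rm R})$. In either case the identity $\operatorname{ind}^{\rm O}(c,D) = - \operatorname{ind}^{\rm U}(c,D)$ reduces to the already established $\operatorname{ind}^{\rm L}(c,D) = - \operatorname{ind}^{\rm R}(c,D)$. There is no serious obstacle here; the only care needed is in the mixed case, where one must verify that the flat sign of $\gamma$ as encountered from the strand on $D_c^{\rm L}$ is indeed opposite to its flat sign as encountered from the strand on $D_c^{\rm R}$ — but this is immediate from the definition of the flat sign, since one viewpoint reads $\gamma$ as an over-crossing and the other reads it as an under-crossing.
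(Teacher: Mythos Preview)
Your argument is correct. The paper itself does not give a proof of this lemma; it simply cites the result from \cite{rkauF,rST} and moves on. Your three-case analysis --- both strands of $\gamma$ on $D_c^{\rm L}$, both on $D_c^{\rm R}$, or one on each --- together with the observation that the over- and under-passages at any crossing contribute opposite flat signs, is exactly the standard combinatorial argument for this fact, and your deduction of the second identity from the first via the definition of $(D_c^{\rm O}, D_c^{\rm U})$ in terms of $(D_c^{\rm L}, D_c^{\rm R})$ is immediate. There is nothing to compare against in the present paper, but your write-up would serve perfectly well as the omitted proof.
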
 

Using this lemma and definitions, we have the following. 

\begin{lem}\label{indexequivalent}
For any classical crossing $c$ of a virtual knot diagram $D$,  
$$ W_D(c) = - {\rm Ind}^{\rm ST}(c).  $$ 
\end{lem}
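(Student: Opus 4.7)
The plan is to simply chain together the displayed definitions in Section~\ref{sect:defmain} with the second identity of the preceding lemma. The statement is essentially a tautological consequence of unwinding the two competing notations (Kauffman's $W_D(c)$ vs.\ Satoh--Taniguchi's ${\rm Ind}^{\rm ST}(c)$) and then invoking the sign flip between the over and under counting components.

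First I would record the two defining identities stated just before the lemma:
\[
W_D(c) = {\rm ind}^{\rm U}(c, D), \qquad {\rm Ind}^{\rm ST}(c) = {\rm ind}^{\rm O}(c, D).
\]
(For $W_D(c)$ this uses the case split ${\rm sgn}(c) = \pm 1$ together with the identification $D_c^{\rm U} = D_c^{\rm L}$ when ${\rm sgn}(c) = +1$ and $D_c^{\rm U} = D_c^{\rm R}$ when ${\rm sgn}(c) = -1$; similarly for ${\rm Ind}^{\rm ST}(c)$ with the role of over and under reversed.) Next, I would apply the previous lemma, which asserts that ${\rm ind}^{\rm O}(c, D) = -{\rm ind}^{\rm U}(c, D)$, to conclude
\[
W_D(c) = {\rm ind}^{\rm U}(c, D) = -{\rm ind}^{\rm O}(c, D) = -{\rm Ind}^{\rm ST}(c),
\]
completing the proof.

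There is no real obstacle here: once both invariants are rewritten in terms of the over/under affine indices, the claim is immediate from the previous lemma. The only point deserving a line of care is to verify the identification $W_D(c) = {\rm ind}^{\rm U}(c, D)$ in both sign cases, but this is a direct matter of comparing Kauffman's $W_{\pm}$ convention with the over/under relabeling $D_c^{\rm O}, D_c^{\rm U}$ introduced in Figures~\ref{fig:smoothOUpositive} and~\ref{fig:smoothOUnegative}.
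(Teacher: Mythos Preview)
Your proposal is correct and matches the paper's own argument exactly: the paper simply states that the lemma follows ``using this lemma and definitions,'' and your write-up is precisely the one-line unwinding of $W_D(c)={\rm ind}^{\rm U}(c,D)$, ${\rm Ind}^{\rm ST}(c)={\rm ind}^{\rm O}(c,D)$, together with ${\rm ind}^{\rm O}(c,D)=-{\rm ind}^{\rm U}(c,D)$ from the preceding lemma.
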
 

This lemma implies the following. 

\begin{prop}\label{prop:STandP}
Let $n$ be a non-zero integer.  For any virtual knot $K$, 
the coefficient of $t^n$ of the affine index polynomial $P_K(t)$ is $J_{-n}(K)$.  
\end{prop}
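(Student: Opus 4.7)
The plan is to read the coefficient of $t^n$ off the definition of $P_D(t)$ and identify it with a sum of signs over crossings with a prescribed index, which is exactly what $J_{-n}$ computes.

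Concretely, fix a diagram $D$ representing $K$. The definition
\[
P_D(t) = \sum_{c \in X(D)} \mathrm{sgn}(c)\bigl(t^{W_D(c)} - 1\bigr)
\]
partitions the classical crossings of $D$ according to the integer $W_D(c)$. For any non-zero integer $n$, a crossing $c$ contributes to the $t^n$-monomial exactly when $W_D(c) = n$; in that case the contribution is $\mathrm{sgn}(c)$. (The $-1$ terms contribute only to the constant term and do not affect the coefficient of $t^n$ for $n \ne 0$.) Therefore the coefficient of $t^n$ in $P_D(t)$ equals
\[
\sum_{c \in X(D) \,:\, W_D(c)=n} \mathrm{sgn}(c).
\]

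Next I would apply Lemma~\ref{indexequivalent}, which says $W_D(c) = -\mathrm{Ind}^{\mathrm{ST}}(c)$. The condition $W_D(c) = n$ is therefore equivalent to $\mathrm{Ind}^{\mathrm{ST}}(c) = -n$, so the above sum equals
\[
\sum_{c \in X(D) \,:\, \mathrm{Ind}^{\mathrm{ST}}(c)=-n} \mathrm{sgn}(c) = J_{-n}(D) = J_{-n}(K).
\]
Since the left-hand side is the coefficient of $t^n$ in $P_K(t) = P_D(t)$, this completes the identification.

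There is essentially no obstacle: the whole argument is a rearrangement of the defining sums once Lemma~\ref{indexequivalent} is in hand. The only point worth flagging is the restriction $n \ne 0$, which is needed because the constant term of $P_D(t)$ is shifted by $-\sum_c \mathrm{sgn}(c)$ coming from the $-1$ contributions, and because $J_0(K)$ is not defined (it would coincide with the writhe, which is not a virtual knot invariant).
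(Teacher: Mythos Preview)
Your argument is correct and is exactly what the paper intends: the authors simply state that Lemma~\ref{indexequivalent} implies the proposition, and your write-up fills in the straightforward details of reading off the $t^n$-coefficient and applying that lemma.
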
 

Thus, the  affine index polynomial $P_K(t)$ determines all $J_n(K)$ $(n\neq 0)$. Conversely, 
all $J_n(K)$ $(n\neq 0)$ determine $P_K(t)$, since $P_K(1) = 0$. 
Therefore we may say that for a virtual knot $K$, 
the family of $n$-writhes $\{ J_n(K) \}_{n \neq 0}$ and the affine index polynomial $P_K(t)$ are essentially the same invariant.

\section{Invariants of virtual links using affine indices}\label{sect:links} 

We have discussed invariants for virtual knots. 
In this section, we discuss invariants for virtual links.  

Let $D$ be a virtual link diagram.  
A classical self-crossing of $D$ means a classical crossing of $D$ such that the over path and the under path are on the same component of $D$.  
We denote by ${\rm Self}X(D)$ the set of classical self-crossings of $D$. 

Let $c$ be a classical self-crossing of $D$. 
Let $D_c$ be the virtual link obtained from $D$ by smoothing at $c$, and 
let $D_c^{\rm O}$ and $D_c^{\rm U}$ be the over and under counting components for $D$ at $c$, which are components of $D_c$ as in Figure~\ref{fig:smoothOUpositive} or \ref{fig:smoothOUnegative}. 

The {\it over affine index} of $D$ at $c$, denoted by ${\rm ind}^{\rm O}(c)$ or  ${\rm ind}^{\rm O}(c, D)$, and the {\it under affine index} of $D$ at $c$, denoted by ${\rm ind}^{\rm U}(c)$ or ${\rm ind}^{\rm U}(c, D)$, are also defined as the the same way with those in the case where $D$ is a virtual knot diagram.

\begin{definition}\label{def:virtuallinkinvariants}{\rm 
Let $D$ be a virtual link diagram. Let $n$ be a non-zero integer. 
\begin{itemize}
\item[(1)] 
The {\it over $n$-writhe} of $D$ is defined by 
$$ 
J^{\rm O}_n(D) = \sum_{c \in {\rm Self}X(D) ~:~ 
{\rm ind}^{\rm O}(c) = n, ~ 
{\rm ind}^{\rm U}(c) \neq 0} 
{\rm sgn}(c). 
$$ 
\item[(2)] 
The {\it under $n$-writhe} of $D$ is defined by 
$$ 
J^{\rm U}_n(D) = \sum_{c \in {\rm Self}X(D) ~:~ 
{\rm ind}^{\rm U}(c) = n, ~ 
{\rm ind}^{\rm O}(c) \neq 0} 
{\rm sgn}(c). 
$$ 
\item[(3)] 
The {\it over affine index polynomial} of $D$ is defined by 
$$ 
P^{\rm O}_D(t) = \sum_{c \in {\rm Self}X(D) ~:~ 
{\rm ind}^{\rm U}(c) \neq 0} 
{\rm sgn}(c) (t^{{\rm ind}^{\rm O}(c)} -1). 
$$ 
\item[(4)] 
The {\it under affine index polynomial} of $D$ is defined by 
$$ 
P^{\rm U}_D(t) = \sum_{c \in {\rm Self}X(D) ~:~ 
{\rm Ind}^{\rm O}(c) \neq 0} 
{\rm sgn}(c) (t^{{\rm ind}^{\rm U}(c)} -1). 
$$ 
\item[(5)] 
The {\it over-under affine index polynomial} of $D$ is defined by 
$$ 
P^{\rm OU}_D(s, t) = \sum_{c \in {\rm Self}X(D)} 
{\rm sgn}(c) (s^{{\rm ind}^{\rm O}(c)} -1) (t^{{\rm ind}^{\rm U}(c)} -1). 
$$ 
\end{itemize}
}
\end{definition}

\begin{thm}\label{virtuallinkinvariants}
Let $D$ be a virtual link diagram. 
Let $n$ be a non-zero integer. Then 
$J^{\rm O}_n(D)$, $J^{\rm U}_n(D)$, $P^{\rm O}_D(t)$, $P^{\rm U}_D(t)$ and  
$P^{\rm OU}_D(s, t)$ are virtual link invariants. 
\end{thm}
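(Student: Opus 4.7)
The plan is to verify that each of the five invariants is preserved under every one of the seven generalized Reidemeister moves in Figure~\ref{fgmoves}. All five invariants are assembled from the same local data at each classical self-crossing $c$, namely ${\rm sgn}(c)$ together with the pair $({\rm ind}^{\rm O}(c), {\rm ind}^{\rm U}(c))$, so it suffices to track how the set ${\rm Self}X(D)$, the signs, and the two affine indices change under each move.

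The four virtual Reidemeister moves create, destroy, or reorder no classical crossings, and for every classical crossing $c$ the smoothed diagrams $D_c^{\rm O}$ and $D_c^{\rm U}$ change only by virtual isotopies performed away from the classical crossings being counted; hence ${\rm sgn}(c)$, membership in ${\rm Self}X(D)$, and both ${\rm ind}^{\rm O}(c)$, ${\rm ind}^{\rm U}(c)$ are preserved. The classical Reidemeister III move can be handled exactly as in the virtual knot case \cite{rkauF, rST}: the three crossings involved keep their signs, their self-crossing character, and both of their affine indices, by inspection of the local picture.

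For Reidemeister I, the unique new crossing $c_0$ is automatically a self-crossing, and smoothing at $c_0$ detaches a small null-homotopic loop $\ell$ that contains no classical crossing. Since $\ell$ equals exactly one of $D_{c_0}^{\rm O}, D_{c_0}^{\rm U}$, at least one of ${\rm ind}^{\rm O}(c_0), {\rm ind}^{\rm U}(c_0)$ vanishes. A direct case check against Definition~\ref{def:virtuallinkinvariants} then shows that the term at $c_0$ is zero in every invariant: it is killed either by a factor $(t^0 - 1) = 0$ or $(s^0 - 1)(t^0 - 1) = 0$, or by the exclusion clause ``${\rm ind}^{\rm U}(c) \ne 0$'' or ``${\rm ind}^{\rm O}(c) \ne 0$''. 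For any other self-crossing $c$, smoothing at $c$ leaves the R1 region intact, so the two strands meeting at $c_0$ stay on a single component of $D_c$; thus $c_0$ is traversed exactly twice by the counting component passing through it, once as an over-crossing and once as an under-crossing, and the two new flat signs cancel.

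For Reidemeister II, two new crossings $c_1, c_2$ of opposite sign appear inside a bigon. If the two strands of the bigon belong to different components of the link then neither $c_i$ is a self-crossing and the invariants are untouched. Otherwise both $c_i$ lie in ${\rm Self}X(D)$, and comparing the smoothings $D_{c_1}$ and $D_{c_2}$, which agree outside the bigon, yields $D_{c_1}^{\rm O} = D_{c_2}^{\rm O}$ and $D_{c_1}^{\rm U} = D_{c_2}^{\rm U}$ up to isotopy, so ${\rm ind}^{\rm O}(c_1) = {\rm ind}^{\rm O}(c_2)$ and ${\rm ind}^{\rm U}(c_1) = {\rm ind}^{\rm U}(c_2)$. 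Combined with ${\rm sgn}(c_1) = -{\rm sgn}(c_2)$, the pair contributes zero to every sum in Definition~\ref{def:virtuallinkinvariants}, and the exclusion clauses treat $c_1$ and $c_2$ identically because the tested indices coincide; a parallel bookkeeping shows that the bigon adds zero net flat sign to the counting component of any other self-crossing. The main technical obstacle will be precisely this R2 step: one must carefully match the orientation-respecting rewirings of the four external ends of the bigon under smoothings at $c_1$ versus $c_2$ in order to confirm the equality of the two pairs of affine indices, after which the cancellations go through automatically.
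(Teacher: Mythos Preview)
Your argument is correct and follows essentially the same route as the paper's proof: dispose of the virtual moves, then check for each classical Reidemeister move that the indices of uninvolved self-crossings are unchanged, that the new crossing in R1 has a vanishing over- or under-index, that the two R2 crossings have identical index pairs and opposite signs, and that the three R3 crossings keep their signs and both indices. Your write-up is in fact more careful than the paper's in two places---you make explicit why the affine indices of the remaining self-crossings are unaffected (the R1 kink and the R2 bigon contribute cancelling flat signs to any counting loop that meets them), and you separate the R2 case where the two strands lie on different components---but the strategy and the key observations are the same.
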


By definition, $J^{\rm O}_n(D)$ and $J^{\rm U}_n(D)$ are the coefficients of $t^n$ in 
$P^{\rm O}_D(t)$ and  $P^{\rm U}_D(t)$, respectively.  Therefore the invariance of $J^{\rm O}_n(D)$ and $J^{\rm U}_n(D)$  comes from that of $P^{\rm O}_D(t)$ and  $P^{\rm U}_D(t)$.  

\begin{proof} 
Virtual Reidemeister moves do not change $P^{\rm O}_D(t)$, $P^{\rm U}_D(t)$ and $P^{\rm OU}_D(s, t)$. Thus, it is sufficient to show that $P^{\rm O}_D(t)$, $P^{\rm U}_D(t)$ and $P^{\rm OU}_D(s, t)$ are preserved under Reidemeister moves. 

(i) Let $D'$ be obtained from $D$ by a Reidemesiter move I as in Figure~\ref{fgmoves}, where we assume 
$X(D') = X(D) \cup \{ c_1 \}$. 
For $c \in X(D) = X(D') \setminus \{ c_1 \}$, we see that 
${\rm ind}^{\rm O}(c)$ and ${\rm ind}^{\rm U}(c)$ do not change.  
Since 
${\rm ind}^{\rm O}(c_1)=0$ or ${\rm ind}^{\rm U}(c_1)=0$,  
we have $P^{\rm O}_D(t) = P^{\rm O}_{D'}(t)$, $P^{\rm U}_D(t) = P^{\rm U}_{D'}(t)$ and $P^{\rm OU}_D(s, t) = P^{\rm OU}_{D'}(s, t)$.

(ii) Let $D'$ be obtained from $D$ by a Reidemesiter move II as in Figure~\ref{fgmoves}. 
For $c \in X(D) = X(D') \setminus \{ c_1, c_2 \}$, we see that  
${\rm ind}^{\rm O}(c)$ and ${\rm ind}^{\rm U}(c)$ do not change. 
Note that ${\rm ind}^{\rm O}(c_1) = {\rm ind}^{\rm O}(c_2)$ and ${\rm ind}^{\rm U}(c_1) = {\rm ind}^{\rm U}(c_2)$. 
Since ${\rm sgn}(c_1) = - {\rm sgn}(c_2)$, we see that the contribution of 
$c_1$ cancels with that of $c_2$.  

(iii) Let $D'$ be obtained from $D$ by a Reidemesiter move III as in Figure~\ref{fgmoves}. 
For $c \in X(D) \setminus \{ c_1, c_2, c_3 \} = X(D') \setminus \{ c'_1, c'_2, c'_3 \}$, we see that  
${\rm ind}^{\rm O}(c)$ and ${\rm ind}^{\rm U}(c)$ do not change. 
Note that, for $i \in \{1, 2, 3\}$, ${\rm sgn}(c_i) = {\rm sgn}(c'_i)$ and 
$$ 
{\rm ind}^{\rm O}(c_i, D) = {\rm ind}^{\rm O}(c'_i, D') \quad \text{and} \quad   
{\rm ind}^{\rm U}(c_i, D) = {\rm ind}^{\rm U}(c'_i, D').  $$
Thus we have $P^{\rm O}_D(t) = P^{\rm O}_{D'}(t)$, $P^{\rm U}_D(t) = P^{\rm U}_{D'}(t)$ and $P^{\rm OU}_D(s, t) = P^{\rm OU}_{D'}(s, t)$. 
\end{proof}

For a virtual link $L$ represented by a diagram $D$, we define $J^{\rm O}_n(L)$, $J^{\rm U}_n(L)$, $P^{\rm O}_L(t)$, $P^{\rm U}_L(t)$ and $P^{\rm OU}_L(s, t)$ by 
$J^{\rm O}_n(D)$, $J^{\rm U}_n(D)$, $P^{\rm O}_D(t)$, $P^{\rm U}_D(t)$ and $P^{\rm OU}_D(s, t)$, respectively.  

\begin{rem}{\rm 
(1) 
When $K$ is a virtual knot, we have $J_n(K) = J^{\rm O}_n(K) = - J^{\rm U}_n(K)$ for $n \neq 0$, and 
$ P_K(t) = P^{\rm U}_K(t) = P^{\rm O}_K(t^{-1})$.  

(2) Let $\ast \in \{ {\rm O}, {\rm U} \}$. For any virtual link $L$, the polynomial $P^{\ast}_L(t)$ determines all $J^{\ast}_n(L)$ $(n\neq 0)$. Conversely all $J^{\ast}_n(L)$ $(n\neq 0)$ determine $P^{\ast}_L(t)$. 

(3) For any virtual link $L$, 
$$ 
P^{\rm O}_L(t) = - P^{\rm OU}_L(t, 0) \quad \text{\rm and} \quad 
P^{\rm U}_L(t) = - P^{\rm OU}_L(0, t).
$$
}\end{rem}

\begin{rem}\label{rem:compatible}{\rm 
A virtual link diagram $D$ is called {\it compatible} if  
for any classical crossing $c$,  
${\rm ind}^{\rm L}(c, D) = - {\rm ind}^{\rm R}(c, D)$ or equivalently if ${\rm ind}^{\rm O}(c, D) = - {\rm ind}^{\rm U}(c, D)$. 
A virtual knot diagram is always compatible.  
In~\cite{rKauffman2018} Kauffman stated that the affine index polynomial $P_K(t)$ of a virtual knot is naturally generalized to 
the affine index polynomial $P_L(t)$ of a compatible virtual link. 
The invariant $P_L(t)$ is studied in \cite{rPetit2020}.  
For a compatible virtual link $L$, we have $$P_L(t) = P^{\rm U}_L(t) = P^{\rm O}_L(t^{-1}), $$  
namely, our invariants $P^{\rm O}_L(t)$ and $P^{\rm U}_L(t)$ are essentially the same with $P_L(t)$.  
}\end{rem}

\begin{rem}{\rm 
An invariant of a $2$-component virtual link, called the Wriggle polynomial,  is defined in \cite{rFolwacznyKauffman}. Using it, the affine index polynomial $P_K(t)$ of a virtual knot is understood in terms of the linking number, \cite{rFolwacznyKauffman}.  Cheng and Gao \cite{rChengGao2013} investigated an invariant of a $2$-component virtual link, which is related to the linking number. These invariants for $2$-component virtual links are computed by using crossings between the two components, while our polynomial invariants $P_L^{\rm O}(t)$, $P_L^{\rm U}(t)$ and $P_L^{\rm OU}(s,t)$ are defined and computed by using self-crossings.  
}\end{rem}

The following two remarks were suggested by the referee.

\begin{rem}\label{rem:Xu}{\rm 
An invariant of a virtual link, denoted by $Wr_{L}(t)$, is defined in Definition~5.6 of  \cite{rXu}.  In order to define $Wr_{L}(t)$,  an index denoted by ${\rm Ind}'(c)$ of a self-crossing $c$ of a diagram is introduced in \cite{rXu}. For a virtual link diagram $D$, 
$$Wr_{D}(t) = \sum_{c \in {\rm Self}X(D) ~:~ 
{\rm Ind}(c) \neq 0} 
{\rm sgn}(c) (t^{{\rm Ind}'(c)} -1). 
$$ 
This invariant and our invariant 
$$ 
P^{\rm O}_D(t) = \sum_{c \in {\rm Self}X(D) ~:~ 
{\rm ind}^{\rm U}(c) \neq 0} 
{\rm sgn}(c) (t^{{\rm ind}^{\rm O}(c)} -1). 
$$ 
are essentially the same.  In fact, since it holds that 
\begin{itemize}
\item ${\rm Ind}(c) = 0$ if and only if 
${\rm ind}^{\rm U}(c) = 0$ or 
${\rm ind}^{\rm O}(c) = 0$, and 
\item 
${\rm Ind}'(c) = {\rm ind}^{\rm O}(c)$, 
\end{itemize}
we obtain 
$$ P^{\rm O}_D(t)  = Wr_{D}(t) -  Wr_{D}(1).$$ 
In other words, $P^{\rm O}_L(t)$ is the normalization of $Wr_{L}(t)$ which vanishes at $t=1$. 
}\end{rem}

\begin{rem}\label{rem:linking}{\rm 
When we discuss ordered virtual links, we may consider component-wise linking numbers.  
Let ${\rm lk}(K_i, K_j)$ be the {\rm $(i,j)$-linking number} of an ordered $n$-component virtual link $L = K_1 \cup \dots \cup K_n$, which is defined to be the sum of signs of nonself-crossings where the over and under paths come from $K_i$ and $K_j$, respectively. Moreover, let 
$${\rm vlk}(K_i, K_j) = {\rm lk}(K_j, K_i) - {\rm lk}(K_i, K_j) $$ 
be the {\it virtual $(i,j)$-linking number} of a pair $(K_i, K_j)$.  Put 
$$\lambda_i = \lambda_i(L) = 
\sum_{j=1}^n  {\rm vlk}(K_i, K_j) $$ 
for each $i$. Let $S(L)_i$ be the set of self-crossings whose over and under paths come from $K_i$ both. Then for any $c \in S(L)_i$ satisfies 
$${\rm ind}^{\rm O}(c) + {\rm ind}^{\rm U}(c)  = \lambda_i.$$  
In \cite{rXu}, Xu splits the writhe polynomial $Wr_{L}(t)$ into $\sum_{i=1}^n W_i(t)$, where 
$$W_i(t)  = \sum_{c \in S(L)_i ~:~ 
{\rm Ind}(c) \neq 0} 
{\rm sgn}(c) (t^{{\rm Ind}'(c)} -1). 
$$ 
Therefore, for an ordered virtual link, the ordered set $\{ W_i(t) \}_{i=1, \dots, n}$ is an invariant which is a refinement of $Wr_L(t)$. 

The same applies to 
$P^{\rm O}_L(t)$ and $P^{\rm U}_L(t)$.  We remark that ${\rm Ind}(c) \neq 0$ if and only if 
${\rm ind}^{\rm \ast}(c) \neq 0, \lambda_i$ for each $\ast \in \{ {\rm O}, {\rm U} \}$. 
For $\ast \in \{ {\rm O}, {\rm U} \}$, put 
$$ 
P^{\rm \ast}_L(t)_i = \sum_{c \in S(L)_i ~:~ 
{\rm ind}^{\rm \ast}(c) \neq 0, \lambda_i} 
{\rm sgn}(c) (t^{{\rm ind}^{\rm \ast}(c)} -1). 
$$ 
Since $P^{\rm \ast}_L(t) = \sum_{i=1, \dots, n} P^{\rm \ast}_L(t)_i$ holds, 
$\{ P^{\rm \ast}_L(t)_i \}_{i=1, \dots, n}$ is an invariant of an ordered virtual link, which is 
a refinement of $P^{\rm \ast}_L(t)$. 
Furthermore, it holds that 
\begin{eqnarray*}
P^{\rm U}_L(t)_i 
&= & \sum_{c \in S(L)_i ~:~ 
{\rm ind}^{\rm U}(c) \neq 0, \lambda_i} 
{\rm sgn}(c) (t^{{\rm ind}^{\rm U}(c)} -1) \\ 
&= & \sum_{c \in S(L)_i ~:~ 
{\rm ind}^{\rm O}(c) \neq 0, \lambda_i} 
{\rm sgn}(c) (t^{\lambda_i - {\rm ind}^{\rm O}(c)} -1) \\
&= & \sum_{c \in S(L)_i ~:~ 
{\rm ind}^{\rm O}(c) \neq 0, \lambda_i} 
{\rm sgn}(c) (t^{\lambda_i - {\rm ind}^{\rm O}(c)} - t^{\lambda_i}) \\ 
& & 
- \sum_{c \in S(L)_i ~:~ 
{\rm ind}^{\rm O}(c) \neq 0, \lambda_i} 
{\rm sgn}(c) (t^{\lambda_i} -1 ) \\ 
&= & 
t^{\lambda_i} \cdot P^{\rm O}_L(t^{-1})_i - \left(  
\sum_{k \neq 0, \lambda_i} J^{\rm O}_k(L) \right) 
\cdot ( t^{\lambda_i} -1 ).  
\end{eqnarray*}
In this seise, if we assume the information $\{ \lambda_i \}_{i=1, \dots, n}$ regarding virtual linking numbers is known, we can say that invariants $\{ P^{\rm O}_L(t)_i \}_{i=1, \dots, n}$ and $\{ P^{\rm U}_L(t)_i \}_{i=1, \dots, n}$ are essentially the same. 

Similarly, when we split the polynomial 
 $P^{\rm OU}_L(s,t)$ into $\sum_{i=1}^n P^{\rm OU}_L(s,t)_i$, it holds that 
 \begin{eqnarray*}
P^{\rm OU}_L(s,t)_i 
&= & \sum_{c \in S(L)_i} 
{\rm sgn}(c) (s^{{\rm ind}^{\rm O}(c)} -1)  (t^{{\rm ind}^{\rm U}(c)} -1) \\  
&= & \sum_{c \in S(L)_i ~:~ 
{\rm ind}^{\rm O}(c) \neq 0, \lambda_i} 
{\rm sgn}(c) ( s^{{\rm ind}^{\rm O}(c)} t^{{\rm ind}^{\rm U}(c)} - 
s^{{\rm ind}^{\rm O}(c)} - t^{{\rm ind}^{\rm U}(c)} +1)  \\ 
&= & \sum_{c \in S(L)_i ~:~ 
{\rm ind}^{\rm O}(c) \neq 0, \lambda_i} 
{\rm sgn}(c) ( s^{{\rm ind}^{\rm O}(c)} t^{ \lambda_i - {\rm ind}^{\rm O}(c)} - 1) 
- P^{\rm O}_L(t)_i - P^{\rm U}_L(t)_i \\ 
&= & 
t^{\lambda_i} \cdot P^{\rm O}_L(st^{-1})_i   
- \left(  
\sum_{k \neq 0, \lambda_i} J^{\rm O}_k(L) \right) 
\cdot  ( t^{\lambda_i} -1 ) 
- P^{\rm O}_L(t)_i - P^{\rm U}_L(t)_i
\end{eqnarray*}
In this seise, if we assume  the information $\{ \lambda_i \}_{i=1, \dots, n}$  
regarding virtual linking numbers is known,  
 $\{ P^{\rm OU}_L(s,t)_i \}$ is obtained from $\{ P^{\rm O}_L(t)_i \}$.  
}\end{rem}

\section{Alexander numberings and cut systems}\label{sect:cut} 

In this section we first recall Alexander numberings (\cite{rboden}), oriented cut points and cut systems (\cite{rkn2019}).  Then 
we discuss the invariants introduced in the previous section in terms of cut systems.

Let $D$ be a virtual link diagram. 
{\it A semi-arc} of $D$ means either an immersed arc in a component of $D$ whose endpoints are classical crossings or an immersed  loop missing classical crossings of $D$.  There may exist virtual crossings of $D$ on a semi-arc. 

Let $m$ be a non-negative integer. 
An {\it Alexander numbering} (or a {\it mod $m$ Alexander numbering}) 
of $D$ is an assignment of an element of $\mathbb{Z}$ (or $\mathbb{Z}_m = \mathbb{Z} / m \mathbb{Z}$) 
to each semi-arc of $D$ such that for each classical crossing 
the numbers assigned to the semi-arcs around it are as  shown in Figure~\ref{fgalexnum} for some $i \in \mathbb{Z}$ (or $i \in \mathbb{Z}_m$). 
The numbers around a virtual crossing are as in Figure~\ref{fgalexnumv}, since a virtual crossing is just a self-crossing of a semi-arc or a crossing of two semi-arcs. 

\vspace{0.3cm}
\begin{figure}[h]
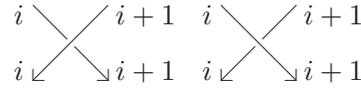

\centerline{
\alexnum{.5mm}
}
\caption{Alexander numbering}\label{fgalexnum}
\end{figure}

\vspace{0.3cm}
\begin{figure}[h]
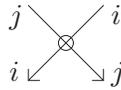

\centerline{
\alexnumv{.5mm}
}
\caption{Alexander numbering around a virtual crossing}\label{fgalexnumv}
\end{figure}

Not every virtual link diagram admits an Alexander numbering, while every classical link diagram does.    The virtual knot diagram depicted in Figure~\ref{fgexAlexN2} (i) does not admit an Alexander numbering, and the virtual knot diagram 
in Figure~\ref{fgexAlexN2} (ii) does. 

\begin{figure}[h]
\centerline{
\includegraphics[width=7.cm]{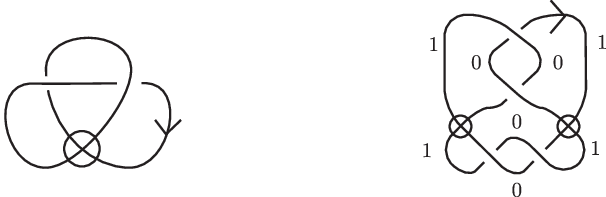}
}
\centerline{(i)\hspace{4.5cm}(ii)}
\caption{(i) A diagram which does not admit an Alexander numbering and (ii) a diagram which does}\label{fgexAlexN2}
\end{figure}

Figure~\ref{fgexMAlexN1} shows a mod $3$ Alexander numbering, which is not an Alexander numbering.

\begin{figure}[h]
\centerline{
\includegraphics[width=4.5cm]{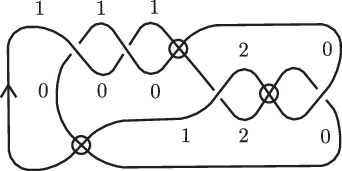}
}
\caption{A mod $3$ Alexander numbering of a virtual knot diagram}\label{fgexMAlexN1}
\end{figure}

A virtual link diagram  is {\it almost classical} (or {\it mod $m$ almost classical\/}) 
if it admits an Alexander numbering (or a mod $m$ Alexander numbering). 
A virtual link $L$ is {\it almost classical\/} (or {\it mod $m$ almost classical})  
if there is an almost classical (or mod $m$ almost classical) virtual link diagram representing $L$.

Boden, Gaudreau, Harper, Nicas and White~\cite{rboden} studied mod $m$ almost classical virtual knots. 
It is shown in \cite{rboden} that for a mod $m$ almost classical virtual knot $K$, if $D$ is a minimal (in numbers of classical crossings) virtual knot diagram of $K$, then $D$ is mod $m$ almost classical.

An {\it oriented cut point} is a point on a semi-arc of $D$ at which a local orientation of the semi-arc is given (\cite{rkn2019}). In this paper we denote it by a small triangle on the semi-arc as in Figure~\ref{fgorientedcutpt}.  
     An oriented cut point is called  {\it coherent} (or.  {\it incoherent}) if the local orientation indicated by the cut point  is the same with  (or the opposite to) the orientation of the virtual link diagram $D$.

\vspace{0.3cm} 
\begin{figure}[h]
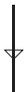

\centerline{
\orientedcutpt{.6mm}
}
\caption{An oriented cut point}\label{fgorientedcutpt}
\end{figure}

Let $D$ be  a virtual link diagram.   A finite set of oriented cut points of $D$ is called 
an {\it oriented cut system} or simply a {\it cut system} if 
$D$ admits an Alexander numbering such that 
at each oriented cut point,  the number increases by one in the direction of the 
oriented cut point (Figure~\ref{fgalexnumorict}).   
Such an Alexander numbering is called an {\it Alexander numbering of a virtual link diagram with a cut system}. 
See Figure~\ref{fgExcutPtAlex} for examples. 

\vspace{0.3cm} 
\begin{figure}[h]
\centerline{
\alexnum{.5mm}\hspace{.6cm}\alexnumv{.5mm}\hspace{.6cm}\alexnumcp{.5mm}
}
\caption{Alexander numbering of a virtual link diagram with a cut system}\label{fgalexnumorict}
\end{figure}
\vspace{0.3cm} 
\begin{figure}[h]
\centerline{
\includegraphics[width=8.5cm]{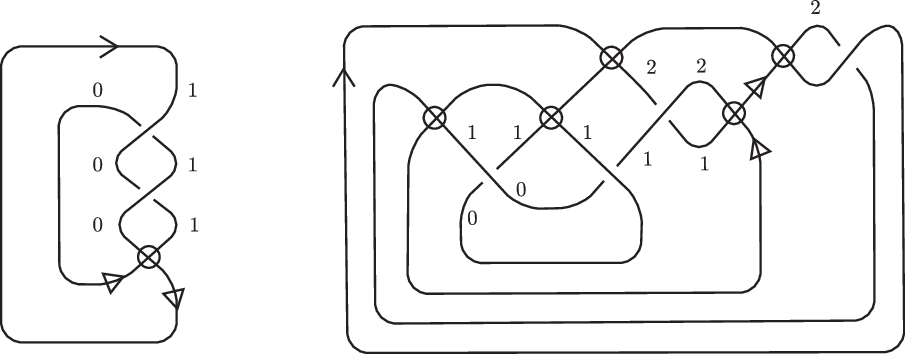}
}
\caption{Alexander numberings of virtual knot diagrams with cut systems}\label{fgExcutPtAlex}
\end{figure}

Every virtual link diagram admits a cut system. For example, we can construct a cut system for a given virtual link diagram as below: 

\begin{example}{\rm 
(1) The {\it binary cut system} is a cut system which is obtained by giving  
 two oriented cut points as in Figure~\ref{fg:canocutsys} around each classical crossing.  

\vspace{0.3cm}
\begin{figure}[h]
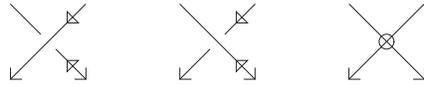

\centerline{
\canocutsysic{.5mm}{1}}
\caption{The binary cut system}\label{fg:canocutsys}
\end{figure}

The binary cut system is a cut system, since we have an 
Alexander numbering using only $0$ and $1$ as in Figure~\ref{fg:canocutsysn} for each crossing.   

\vspace{0.3cm}

\begin{figure}[h]
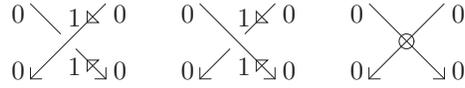

\centerline{
\canocutsysic{.5mm}{2}}
\caption{The binary cut system with an Alexander numbering}\label{fg:canocutsysn}
\end{figure}

(2) 
The  {\it flat cut system} of $D$ is a cut system which is obtained by giving  
 two oriented cut points as in Figure~\ref{fg:stncutsys} around each virtual crossing.

\begin{figure}[h]
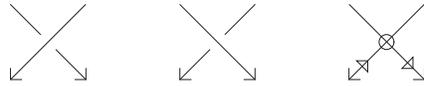

\centerline{
\stnocutsysi{.5mm}{1}}
\caption{The flat cut system}\label{fg:stncutsys}
\end{figure}

The flat cut system is a cut system:  Consider a collection of embedded oriented loops in $\mathbb{R}^2$ obtained from $D$ by smoothing at all (classical/virtual) crossings. We can assign integers to the loops such that an Alexander numbering of $D$ is induced from the integers.  See Figure~\ref{fg:stncutsysn} for a local picture around each crossing. 

\vspace{0.3cm}
\begin{figure}[h]
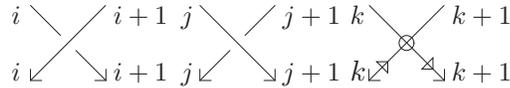

\centerline{
\stnocutsysi{.5mm}{2}}
\caption{The flat cut system with an Alexander numbering}\label{fg:stncutsysn}
\end{figure}

}\end{example}

The local transformations of oriented cut points depicted in Figure~\ref{fgOrientedCutmv} are called 
 {\it oriented cut point moves}.  For a virtual link diagram with a cut system, the result by an oriented cut point move  is also a cut system of the same virtual link diagram.   
Note that the move III$'$ depicted in Figure~\ref{fgOrientedCutmv} is obtained from the move III modulo the moves II. 

\vspace{0.3cm}
\begin{figure}[h]
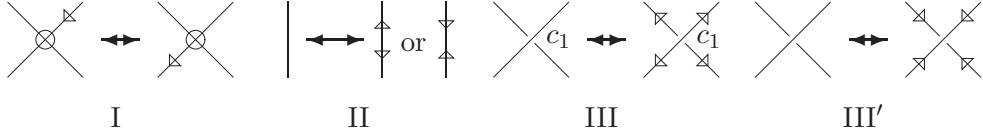

\centerline{
\begin{tabular}{cccc}
\ocpmovei{.5mm}&\ocpmoveii{.5mm}&\ocpmoveiiia{.5mm}{2}&\ocpmoveiiib{.5mm}{1}\\
I&II&III&III$'$\\
\end{tabular}
}
\caption{Oriented cut point moves}\label{fgOrientedCutmv}
\end{figure}

\begin{thm}[\cite{rkn2019}] \label{thm:cutpointmove}
Two cut systems of the same virtual link diagram are related by a finite sequence of oriented cut point moves. 
\end{thm}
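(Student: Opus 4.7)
The plan is to establish the stronger statement that every cut system of $D$ can be transformed to the binary cut system $C_0$ introduced earlier, via a finite sequence of oriented cut point moves. Since all oriented cut point moves are invertible, the theorem then follows by concatenating a reduction of $C_1$ to $C_0$ with the reverse of a reduction of $C_2$ to $C_0$.

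Let $C$ be an arbitrary cut system of $D$, and fix a compatible Alexander numbering $\alpha$. Near each classical crossing $c$, $\alpha$ takes the form $(i_c, i_c, i_c + 1, i_c + 1)$, whereas the numbering $\alpha_0$ associated with $C_0$ uses only values in $\{0, 1\}$ at every classical crossing. The first key step is to show that moves III and III' allow one to insert or remove four coordinated cut points in a small neighborhood of $c$, with the net effect of shifting the local value $i_c$ by $\pm 1$ (the choice between III and III' and the specific orientations of the four inserted cut points depend on the sign of $c$ and on the direction of the desired shift). Applying this operation $|i_c|$ times at each classical crossing produces a new cut system $C'$ whose compatible Alexander numbering agrees with $\alpha_0$ in a neighborhood of every classical crossing. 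The procedure leaves behind auxiliary cut points on the semi-arcs, clustered near each classical crossing.

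Next I analyze these auxiliary cut points on a semi-arc $s$ joining two classical crossings. Let $v$ denote the value of the original $\alpha$ on $s$, and let $a \in \{0, 1\}$ be the value assigned to $s$ by $\alpha_0$ (independent of endpoint, since $\alpha_0$ is constant along $s$). The auxiliary cut points introduced near the two endpoints of $s$ contribute signed jumps $(v - a)$ and $(a - v)$ respectively, so their total signed count along $s$ equals zero. Using moves of type II to slide cut points freely across virtual crossings along $s$, the auxiliary cut points can be brought into adjacent pairs of opposite orientation and removed by move I, leaving precisely $C_0$.

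The main obstacle I anticipate is verifying the local shift claim about moves III and III' in the second paragraph: one must read off the Alexander numberings on either side of each move together with the cut point rule (the number increases by one in the direction of the cut point) and confirm, case by case, that there is indeed an orientation pattern of the four inserted cut points for which the net effect is to shift the inner value $i_c$ by $\pm 1$. Once this local calculation is settled, the remainder is bookkeeping: iterating the shifts to match $\alpha_0$, transporting the auxiliary cut points by move II, and cancelling them in oppositely oriented pairs by move I.
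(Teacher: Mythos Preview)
The paper does not supply a proof of this theorem; it is quoted from \cite{rkn2019}. So there is nothing in the present paper to compare your argument against, and I can only comment on the proposal itself.

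Your overall strategy---use moves III/III$'$ at each classical crossing to normalise the local Alexander number $i_c$ to $0$, then clean up along semi-arcs with the remaining moves---is the right one and is essentially how the result is proved in \cite{rkn2019}. But the execution has genuine gaps. First, a labelling slip: in this paper move~I slides a cut point across a virtual crossing and move~II inserts or deletes an adjacent oppositely oriented pair, not the other way around. More seriously, your cleanup paragraph assumes that the original numbering $\alpha$ takes a single value $v$ on a semi-arc $s$; this is false whenever the given cut system $C$ already has cut points on $s$, so the original cut points of $C$ have silently vanished from your bookkeeping. You also assert that $\alpha_0$ is constant along $s$ with value $a\in\{0,1\}$, but for the binary system $\alpha_0$ equals $1$ only on the short segment between a crossing and its binary cut point on a right arm, and $0$ elsewhere; it is not constant on $s$. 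Finally, ``removing all auxiliary cut points'' does not leave $C_0$: if $i_c=0$ to begin with, you perform no move~III at $c$, so the two binary cut points near $c$ are never created.

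The repair is to drop the constants $v$ and $a$ and argue instead with total signed jumps: after the III/III$'$ normalisations the new cut system $C'$ and the binary system $C_0$ have the same Alexander values at the two endpoints of every semi-arc $s$, hence the same total signed jump along $s$. Two finite sequences of cut points on $s$ with the same signed sum are related by move~II (insert/delete an adjacent opposite pair) together with move~I (pass a cut point through a virtual crossing), since one can reduce each to the unique monotone word of the correct sum and then rebuild. This handles all cut points on $s$ at once---the original ones from $C$, the ones introduced by III/III$'$, and the target ones from $C_0$---and also covers loop semi-arcs carrying no classical crossings, which you did not mention.
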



\begin{cor}[\cite{rkn2019}] \label{cor:numbers}
Let $D$ be a virtual link diagram and  $C$  a cut system of $D$. The number of coherent cut points of $C$ equals that of incoherent cut points of $C$. 
\end{cor}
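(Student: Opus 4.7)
The plan is to pick an Alexander numbering $A$ of the pair $(D,C)$, traverse each component of $D$ in its orientation, and track the total change of $A$ around each closed loop. First I would fix such a numbering $A$, which exists by the very definition of a cut system. The key observation is that as we move along any component $K$ in its orientation, the value of $A$ is piecewise constant and jumps only at (i) cut points on $K$ and (ii) classical crossings on $K$; by Figure~\ref{fgalexnumv}, virtual crossings preserve the label on each semi-arc and contribute nothing.

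Next I would classify the possible jumps. At each cut point on $K$, the rule defining an Alexander numbering with a cut system forces the jump to be $+1$ if the cut point is coherent and $-1$ if it is incoherent. At each classical crossing traversed along $K$, I would read off the jump from Figure~\ref{fgalexnum}: with labels $i,i+1,i,i+1$ arranged around the crossing, the strand running from one diagonal contributes a jump of $+1$ and the other strand contributes $-1$, so the two strands at a classical crossing always produce opposite jumps.

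Then I would sum the closed-loop identity $\sum_{\text{features on }K}(\text{jump})=0$ over all components $K$ of $D$. Every classical crossing $c$ of $D$ is visited exactly twice in this double sum, once per strand (regardless of whether $c$ is a self-crossing or a crossing between distinct components), so by the previous step its total contribution is $(+1)+(-1)=0$. Every cut point is visited exactly once, contributing $+1$ if coherent and $-1$ if incoherent. The telescoping identity therefore reduces to
\[
0 \;=\; \#\{\text{coherent cut points in }C\}\;-\;\#\{\text{incoherent cut points in }C\},
\]
which is the claim.

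The main obstacle is the "opposite jumps" claim at a classical crossing. I would resolve it by direct inspection of Figure~\ref{fgalexnum}: after locally rotating so that both strands point upward, one strand runs from the $i$-labelled lower-left semi-arc to the $(i{+}1)$-labelled upper-right semi-arc, giving $+1$, while the other runs from $i{+}1$ at lower-right to $i$ at upper-left, giving $-1$; this accounting is independent of the sign of the crossing and of which strand is over. Once that local check is in hand, the global balance follows purely from the fact that $A$ is a consistently defined labelling.
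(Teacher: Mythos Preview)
Your proof is correct and takes a genuinely different route from the paper's own argument. The paper proves the corollary by observing that the binary cut system manifestly has equal numbers of coherent and incoherent cut points (each classical crossing contributes one of each), noting that each oriented cut point move in Figure~\ref{fgOrientedCutmv} preserves this balance, and then invoking Theorem~\ref{thm:cutpointmove} to conclude that every cut system has the property. Your argument instead works directly with a single Alexander numbering of $(D,C)$ and exploits the telescoping identity around each component. This is more self-contained: it avoids appealing to the nontrivial classification result Theorem~\ref{thm:cutpointmove} and reduces everything to a local check at crossings and cut points. The paper's approach, on the other hand, reuses machinery already in place and makes the dependence on Theorem~\ref{thm:cutpointmove} explicit; your approach shows that the corollary is in fact logically independent of that theorem.
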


\begin{proof}
The binary cut system for $D$ has the property that the number of coherent cut points  equals that of incoherent cut points.  Since each oriented cut point move preserves this property, by Theorem~\ref{thm:cutpointmove} we see that any cut system has the property.  
\end{proof}

Let $D$ be a virtual link diagram and $C$ a cut system of $D$. 
Let $c$ be a classical self-crossing of $D$ and let $D_c$ be the virtual link diagram obtained from $D$ by smoothing at $c$. 
Let $D_c^{\rm O}$ and $D_c^{\rm U}$ be the over and under counting components for $D$ at $c$.   

For 
$\ast \in \{ {\rm O}, {\rm U} \}$, 
let $\rho^{\ast}(c,C)$ be the 
number of incoherent cut points of $C$ appearing on $D_c^{\ast}$ minus that of coherent ones.

\begin{lem}\label{proofmain2}
Let $C$ and $C'$ be cut systems of $D$. Let $\ast \in \{ {\rm O}, {\rm U} \}$. 
Then $\rho^{\ast}(c, C) = \rho^{\ast}(c, C')$ for $c \in {\rm Self}X(D)$.
\end{lem}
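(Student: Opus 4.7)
The plan is to apply Theorem~\ref{thm:cutpointmove}: any two cut systems of $D$ are related by a finite sequence of oriented cut point moves I, II, III, III'. It therefore suffices to show that $\rho^{\ast}(c,C)$ is preserved under each such move.

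Moves I and II are straightforward. For Move II, the pair of cut points introduced or removed lies on a single semi-arc of $D$, hence on a single component of $D_c$; they carry opposite local orientations, so if both lie on $D_c^{\ast}$ their contributions to $\rho^{\ast}(c,C)$ (one $+1$ and one $-1$) cancel, and otherwise the contribution is zero. For Move I, the cut point slides past a virtual crossing along a single strand with its local orientation unchanged; since smoothing is performed only at the classical crossing $c$, virtual crossings persist in $D_c$ and do not separate its components, so the point stays on the same component of $D_c$ with the same coherent/incoherent status.

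Moves III and III' require more care. Each introduces or removes four cut points at the corners of a classical crossing $c'$, all with outward-pointing local orientations. On each of the two strands of $c'$, the two corresponding cut points sit on an incoming semi-arc (outward opposes the strand, hence incoherent) and on an outgoing semi-arc (outward agrees with the strand, hence coherent). The key observation is that each such pair of semi-arcs lies on a single component of $D_c$: when $c'\neq c$, because $c'$ survives in $D_c$ and the strand passes through it; when $c'=c$, by the very definition of the oriented smoothing, which joins an incoming semi-arc of $c$ to an outgoing one. Each pair therefore contributes one coherent and one incoherent cut point to the component of $D_c$ containing it, yielding a net change of zero in $\rho^{\ast}(c,C)$.

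The main obstacle is the case $c'=c$ of Moves III, III', where the counting component structure is governed by the smoothing itself. This reduces to a short local inspection using the explicit orientations of the four cut points in Figure~\ref{fgOrientedCutmv} together with the identification of $D_c^{\rm O}$ and $D_c^{\rm U}$ in Figures~\ref{fig:smoothOUpositive} and~\ref{fig:smoothOUnegative}, carried out once for each sign of $c$.
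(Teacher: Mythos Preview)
Your approach is the same as the paper's: invoke Theorem~\ref{thm:cutpointmove} and check invariance under each oriented cut point move. Moves I and II are handled correctly, and so is the case $c' \neq c$ of Moves III, III'.

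The reasoning for $c' = c$, however, contains an error. You group the four new cut points into two pairs by strand and claim that ``each such pair of semi-arcs lies on a single component of $D_c$'' because ``the oriented smoothing \ldots\ joins an incoming semi-arc of $c$ to an outgoing one.'' The smoothing does join an incoming semi-arc to an outgoing one, but not on the same strand: it joins the incoming part of one strand to the outgoing part of the \emph{other}. Thus the two cut points you paired together in fact land on \emph{different} components of $D_c$. The correct observation---which is exactly what the paper asserts---is that each of $D_c^{\rm O}$ and $D_c^{\rm U}$ nevertheless receives one coherent and one incoherent cut point, because the two incoming semi-arcs carry cut points of one type and the two outgoing semi-arcs carry the other, and the smoothing always pairs an incoming with an outgoing. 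This is precisely the ``short local inspection'' you promise in your last paragraph; once you actually carry it out (rather than relying on the incorrect strand-pair argument), your proof coincides with the paper's. (A minor side note: in Move~III the four cut points point \emph{inward}, not outward; only Move~III' has outward-pointing cut points. This does not affect the argument, since either way each strand carries one coherent and one incoherent cut point.)
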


\begin{proof}
We show that $\rho^{\ast}(c, C)$ is preserved under oriented cut point moves (Figure~\ref{fgOrientedCutmv}). It is obvious that $\rho^{\ast}(c, C)$ is preserved under moves I and II.  
Consider a move III and  let $c_1$ be the classical crossing as in the figure.  If $c \neq c_1$, then it is obvious that $\rho^{\ast}(c, C)$ is preserved. If $c = c_1$, then 
 the  number of coherent cut points on $D_c^{\rm O}$ (or $D_c^{\rm U}$) increases by one and that of incoherent cut points increases by one. Thus $\rho^{\ast}(c, C)$ is also preserved in this case. 
\end{proof}

By this lemma, for a classical self-crossing $c$ of a virtual link diagram $D$, 
we may define $\rho^{\ast}(c)$ by $\rho^{\ast}(c,C)$, where $\ast \in \{ {\rm O}, {\rm U} \}$ and $C$ is any cut system of $D$.

\begin{thm}\label{proofmain1}
For a classical self-crossing $c$ of a virtual link diagram $D$, 
$\rho^{\ast}(c) = {\rm ind}^{\ast}(c)$, where $\ast \in \{ {\rm O}, {\rm U} \}$.  
\end{thm}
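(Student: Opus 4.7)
The plan is to use Lemma~\ref{proofmain2} to reduce to one convenient cut system, then run a loop-closure argument with the Alexander numbering.

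By Lemma~\ref{proofmain2}, $\rho^{\ast}(c, C)$ does not depend on $C$, so I would fix an arbitrary cut system $C$ of $D$ together with an associated Alexander numbering $f$, and traverse the oriented closed loop $D_c^{\ast}$ exactly once. Since $f$ is well defined and the start and end points of the loop coincide, the total change of $f$ around the loop must vanish. I would decompose this total change into three local contributions: (a) the change across the smoothing region at $c$; (b) the change across each classical crossing $\gamma_i \neq c$ that lies on $D_c^{\ast}$; and (c) the change across each cut point of $C$ on $D_c^{\ast}$.

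Piece (a) is zero: by Figure~\ref{fgalexnum}, the four semi-arcs meeting at $c$ carry Alexander labels $i, i, i+1, i+1$, and the oriented smoothing at $c$ pairs semi-arcs of equal label into a common smoothed arc, so entering and leaving the smoothed region produces the same label. Piece (c) is controlled by the defining property of a cut system: walking in the direction of $D_c^{\ast}$, $f$ jumps by $+1$ at each coherent cut point and by $-1$ at each incoherent cut point, so this piece equals the number of coherent cut points on $D_c^{\ast}$ minus the number of incoherent ones, i.e.\ $-\rho^{\ast}(c, C)$.

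The core of the argument is piece (b). A direct reading of the labels in Figure~\ref{fgalexnum}, in each of the four cases determined by the sign of $\gamma_i$ and by whether $D_c^{\ast}$ passes over or under at $\gamma_i$, shows that the change of $f$ across $\gamma_i$ coincides with $\iota(\gamma_i, D_c^{\ast})$ up to a universal sign depending only on the conventions fixed in Figure~\ref{fgalexnum}; summing over all such $\gamma_i$ gives the contribution $\pm\,{\rm ind}^{\ast}(c)$. Equating the sum of (a), (b), (c) to $0$ and tracking the signs consistently yields $\rho^{\ast}(c) = {\rm ind}^{\ast}(c)$. I expect the main hurdle to be precisely this sign bookkeeping: matching the orientation conventions of Figure~\ref{fgalexnum} with the definitions of the flat sign $\iota$ and of coherent/incoherent cut points. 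Once these local signs are pinned down, the global identity falls out immediately from the loop-closure argument, and in particular the self-crossing $c$ itself contributes $0$ because its adjacent cut points either cancel on $D_c^{\ast}$ or are absent from it.
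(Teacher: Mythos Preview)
Your approach is sound but takes a genuinely different route from the paper's proof. The paper does not run a monodromy argument with an arbitrary cut system; instead it fixes the \emph{binary} cut system of Figure~\ref{fg:canocutsys}, in which each classical crossing carries exactly two cut points placed immediately adjacent to it. With that specific choice, the two cut points attached to $c$ itself form a coherent/incoherent pair and cancel, and every other passage of $D_c^{\ast}$ through a crossing $\gamma$ meets exactly one cut point of $C$. The paper then simply checks, in the four cases determined by ${\rm sgn}(\gamma)$ and by over/under, that this single cut point is incoherent precisely when $\iota(\gamma, D_c^{\ast}) = +1$ and coherent when $\iota(\gamma, D_c^{\ast}) = -1$. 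Summing over all $\gamma$ gives $\rho^{\ast}(c) = {\rm ind}^{\ast}(c)$ term by term, with no global conservation law invoked.

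Your loop-closure argument is more conceptual: it works uniformly for every cut system and explains \emph{why} the two quantities must agree, namely because each measures the total jump of the Alexander numbering around the same closed curve $D_c^{\ast}$, one accounting for the jumps at crossings and the other for the jumps at cut points. In particular Lemma~\ref{proofmain2} becomes a corollary rather than an input. What the paper's route buys is exactly the avoidance of the sign bookkeeping you anticipate: by placing the cut points at explicit positions the paper can read off their coherence from the local picture and match it to $\iota$ by inspection, whereas your piece~(b) requires carefully reconciling the left/right convention of Figure~\ref{fgalexnum} with the left/right convention defining the flat sign. That reconciliation is indeed the only nontrivial step in your argument; once it is done, your proof is complete and arguably more illuminating than the paper's.
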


\begin{proof}
Let $C$ be the binary cut system of $D$. See Figure~\ref{fg:canocutsys}. 
Near the crossing $c$, a pair of coherent cut point and an incoherent cut point appear. They do not contribute to $\rho^{\ast}(c)$. 

Let $\gamma_1, \dots, \gamma_k$ be the sequence of classical crossings of $D$ 
appearing on $D_c^{\ast}$ in this order when we go along $D_c^{\ast}$ from a point near $c$ and come back to the point. Let $\gamma$ be one of $\gamma_1, \dots, \gamma_k$.  

Suppose that we go through $\gamma$ as an over crossing. 
If ${\rm sgn}(\gamma)$ is $+1$ (or  $-1$), then the oriented cut point on $D_c^{\ast}$ near $\gamma$ is coherent (or incoherent). 

Suppose that we go through $\gamma$ as an under crossing.  
If ${\rm sgn}(\gamma)$ is $+1$ (or  $-1$), then the oriented cut point on $D_c^{\ast}$ near $\gamma$ is incoherent (or coherent).

Thus we have $\rho^{\ast}(c) = {\rm ind}^{\ast}(c)$. 
\end{proof}

\begin{definition}\label{def:cutlink}{\rm 
Let $D$ be a virtual link diagram. Let $n$ be a non-zero integer. 
\begin{itemize}
\item[(1)] 
The {\it over $n$-cut-writhe} of $D$ is defined by 
$$ 
J'^{\rm O}_n(D) = \sum_{c \in {\rm Self}X(D) ~:~ 
\rho^{\rm O}(c) = n, ~ 
\rho^{\rm U}(c) \neq 0} 
{\rm sgn}(c). 
$$ 
\item[(2)] 
The {\it under $n$-cut-writhe} of $D$ is defined by 
$$ 
J'^{\rm U}_n(D) = \sum_{c \in {\rm Self}X(D) ~:~ 
\rho^{\rm U}(c) = n, ~ 
\rho^{\rm O}(c) \neq 0} 
{\rm sgn}(c). 
$$ 
\item[(3)] 
The {\it over affine cut-index polynomial} of $D$ is defined by 
$$ 
P'^{\rm O}_D(t) = \sum_{c \in {\rm Self}X(D) ~:~ 
\rho^{\rm U}(c) \neq 0} 
{\rm sgn}(c) (t^{\rho^{\rm O}(c)} -1). 
$$ 
\item[(4)] 
The {\it under affine cut-index polynomial} of $D$ is defined by 
$$ 
P'^{\rm U}_D(t) = \sum_{c \in {\rm Self}X(D) ~:~ 
\rho^{\rm O}(c) \neq 0} 
{\rm sgn}(c) (t^{\rho^{\rm U}(c)} -1). 
$$ 
\item[(5)] 
The {\it over-under cut-affine index polynomial} of $D$ is defined by 
$$ 
P'^{\rm OU}_D(s, t) = \sum_{c \in {\rm Self}X(D)} 
{\rm sgn}(c) (s^{\rho^{\rm O}(c)} -1) (t^{\rho^{\rm U}(c)} -1). 
$$ 
\end{itemize}
}
\end{definition}

Then by Theorem~\ref{proofmain1} we have the following.

\begin{cor}\label{thm:cutlink} 
Let $D$ be a virtual link diagram. Then 
$J^{\ast}_n(D) = J'^{\ast}_n(D)$ $(n \neq 0)$ and 
$P^{\ast}_D(t) = P'^{\ast}_D(t)$ for $\ast \in \{ {\rm O}, {\rm U} \}$, and 
$P'^{\rm OU}_D(s, t) = P'^{\rm OU}_D(s, t)$. 
\end{cor}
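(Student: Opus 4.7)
The plan is to deduce the corollary as a direct consequence of Theorem~\ref{proofmain1}, which identifies $\rho^{\ast}(c)$ with ${\rm ind}^{\ast}(c)$ for every classical self-crossing $c$ and each $\ast \in \{{\rm O}, {\rm U}\}$. Since the cut-system invariants in Definition~\ref{def:cutlink} are obtained from the invariants in Definition~\ref{def:virtuallinkinvariants} by replacing every occurrence of ${\rm ind}^{\ast}(c)$ with $\rho^{\ast}(c)$, the identifications should be a pure substitution.

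More concretely, I would proceed invariant by invariant. For $P^{\rm O}_D(t)$ and $P'^{\rm O}_D(t)$, I would fix $c \in {\rm Self}X(D)$ and observe that the selection condition ${\rm ind}^{\rm U}(c) \neq 0$ defining the summation domain of $P^{\rm O}_D(t)$ coincides, by Theorem~\ref{proofmain1} applied with $\ast = {\rm U}$, with the condition $\rho^{\rm U}(c) \neq 0$ defining $P'^{\rm O}_D(t)$; simultaneously, the exponent ${\rm ind}^{\rm O}(c)$ coincides with $\rho^{\rm O}(c)$ by the same theorem applied with $\ast = {\rm O}$. Hence the sums agree term by term. The same argument, with the roles of ${\rm O}$ and ${\rm U}$ swapped, handles $P^{\rm U}_D(t) = P'^{\rm U}_D(t)$; for $P^{\rm OU}_D(s,t) = P'^{\rm OU}_D(s,t)$ both exponents are replaced simultaneously, and no selection condition is imposed.

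The statements $J^{\ast}_n(D) = J'^{\ast}_n(D)$ then follow either by the identical substitution argument at the level of ${\rm sgn}(c)$-sums, or by extracting the coefficient of $t^n$ from the equalities $P^{\ast}_D(t) = P'^{\ast}_D(t)$ (for $n \neq 0$ the normalizing $-1$ in each summand contributes only to the constant term). I expect no genuine obstacle: the entire content of the corollary is packaged in Theorem~\ref{proofmain1}, whose nontrivial input is Lemma~\ref{proofmain2} showing that $\rho^{\ast}(c,C)$ is independent of the cut system $C$, together with the computation in the binary cut system. If anything merits care it is only to verify that the conditional domains match under the substitution and that no spurious terms with ${\rm ind}^{\ast}(c) = 0$ are introduced by the replacement; both are immediate from $\rho^{\ast}(c) = {\rm ind}^{\ast}(c)$.
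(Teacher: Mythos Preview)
Your proposal is correct and matches the paper's approach exactly: the paper simply states that the corollary follows from Theorem~\ref{proofmain1}, and your argument spells out precisely the substitution $\rho^{\ast}(c) = {\rm ind}^{\ast}(c)$ that makes this immediate. You even correctly read the evident typo $P'^{\rm OU}_D(s,t) = P'^{\rm OU}_D(s,t)$ as the intended $P^{\rm OU}_D(s,t) = P'^{\rm OU}_D(s,t)$.
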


Definition~\ref{def:cutlink} does not introduce new invariants, but it gives an alternative definition or a method of computing the invariants defined in Definition~\ref{def:virtuallinkinvariants} by using cut systems.  
When a virtual link diagram is given with a cut system, we can compute the invariants somehow easier than using the original definition. 

Now we consider the case of a virtual knot. 

For a classical crossing $c$ of a virtual knot diagram $D$, we define $\rho(c)$ or $\rho(c, D)$  by $\rho^{\rm O}(c)$. 
Note that $\rho^{\rm U}(c) = - \rho^{\rm O}(c)$, since  ${\rm ind}^{\rm U}(c) = - {\rm ind}^{\rm O}(c)$.  

\begin{definition}{\rm 
For each non-zero integer $n$, 
we define the {\it $n$-cut-writhe} of $D$, denoted by $J'_n(D)$, by 
$$J'_n(D)= \sum_{c \in X(D) ~:~ \rho(c)=n} {\rm sgn}(c).$$ 
}
\end{definition}

By definition, for a virtual knot diagram $D$, $ J'_n(D) = J'^{\rm O}_n(D)$.  
By Theorem~\ref{proofmain1}, we have the following.

\begin{cor}\label{thm:cutknot} 
Let $n$ be a non-zero integer. 
For any virtual knot diagram $D$, $J'_n(D) = J_n(D)$. 
In particular, the $n$-cut-writhe is a virtual knot invariant.  
\end{cor}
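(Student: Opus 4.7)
The plan is to deduce this corollary directly from Theorem~\ref{proofmain1} together with the definitions, so there is essentially no new work beyond unpacking notation. First I would observe that for a virtual knot diagram $D$ every classical crossing is automatically a self-crossing, so $X(D) = {\rm Self}X(D)$, and the set over which the two sums are taken agrees up to the indexing condition.

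Next I would compare the two indexing conditions. By Theorem~\ref{proofmain1} applied with $\ast = {\rm O}$, we have $\rho^{\rm O}(c) = {\rm ind}^{\rm O}(c)$ for every classical crossing $c$, so by the definitions $\rho(c) = \rho^{\rm O}(c)$ and ${\rm Ind}^{\rm ST}(c) = {\rm ind}^{\rm O}(c)$, the equality $\rho(c) = {\rm Ind}^{\rm ST}(c)$ holds for every $c \in X(D)$. Hence the conditions ``$\rho(c) = n$'' and ``${\rm Ind}^{\rm ST}(c) = n$'' select exactly the same subset of $X(D)$, and summing ${\rm sgn}(c)$ over this common subset gives
\[
J'_n(D) \;=\; \sum_{c \in X(D),\, \rho(c) = n} {\rm sgn}(c) \;=\; \sum_{c \in X(D),\, {\rm Ind}^{\rm ST}(c) = n} {\rm sgn}(c) \;=\; J_n(D).
\]

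Finally, for the invariance statement I would invoke the theorem of Satoh--Taniguchi recalled in Section~\ref{sect:defmain}: if $D$ and $D'$ represent the same virtual knot, then $J_n(D) = J_n(D')$. Combining this with the equality $J'_n(D) = J_n(D)$ just established (and the analogous equality for $D'$) gives $J'_n(D) = J'_n(D')$, so $J'_n$ is a well-defined virtual knot invariant. There is no substantive obstacle here; the only thing to be a little careful about is that the definition of $J'_n(D)$ for a knot carries no ``${\rm ind}^{\rm U}(c) \neq 0$'' side-condition, but this is harmless because $n \neq 0$ together with $\rho(c) = n$ forces $\rho^{\rm O}(c) \neq 0$, which in the knot case is equivalent to $\rho^{\rm U}(c) \neq 0$ via $\rho^{\rm U}(c) = -\rho^{\rm O}(c)$, so the $J'_n$ defined here agrees with the restriction of $J'^{\rm O}_n$ from Definition~\ref{def:cutlink}.
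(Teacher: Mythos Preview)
Your proposal is correct and follows exactly the approach the paper takes: the corollary is stated immediately after Theorem~\ref{proofmain1} with the one-line justification ``By Theorem~\ref{proofmain1}, we have the following,'' and your argument simply unpacks that sentence by matching $\rho(c)=\rho^{\rm O}(c)={\rm ind}^{\rm O}(c)={\rm Ind}^{\rm ST}(c)$ and invoking Satoh--Taniguchi for the invariance.
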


Again, we note that the $n$-cut-writhe $J'_n(D)$ is nothing more than the $n$-writhe $J_n(D)$. However, when a virtual knot diagram is given with a cut system, it gives an alternative method of computing the $n$-writhe. 

\begin{example}{\rm 
Let $K$ be a virtual knot represented by a diagram depicted in Figure~\ref{fgexpolyinv} (left), which is mod  $3$ almost classical. 
It has a cut system as in the figure (right). Let $c_1, c_2, c_3$ and $c_4$ be the classical crossings of the diagram from the top. Then $\rho(c_1)= 3$, $\rho(c_2) = -3$, $\rho(c_3) =0$ and $\rho(c_4)=0$. Thus, we have $J_3(K) = J'_3(D)=1$, $J_{-3}(K)= J'_{-3}(D)=1$ and $J_n(K)= J'_n(D)=0$ for any non-zero integer $n$ with $n \neq \pm 3$. 

\begin{figure}[h]
\centerline{
\includegraphics[width=5cm]{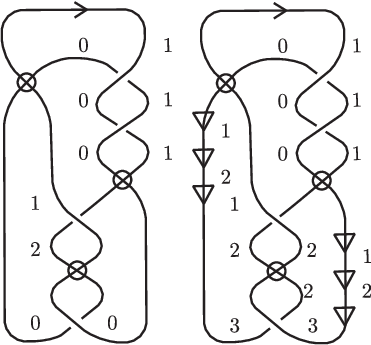}}
\caption{A mod $3$ almost classical virtual knot diagram (left) and a cut system of it (right)}\label{fgexpolyinv}
\end{figure}
}
\end{example}

\begin{prop}
Let $K$ be an almost classical virtual knot. Then we have $J_n(K)=0$ for any non-zero integer $n$.
\end{prop}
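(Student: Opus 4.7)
The plan is to exploit the cut-system reformulation from the preceding section. Since $K$ is almost classical, we may choose a diagram $D$ of $K$ that admits an Alexander numbering. I claim that for such a $D$, the empty set $C = \emptyset$ is a cut system: indeed, the definition of a cut system only requires that $D$ admit an Alexander numbering compatible with the oriented cut points, and with no cut points this condition is exactly the condition that $D$ be almost classical.

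Next, I apply the machinery of Section~\ref{sect:cut}. With the empty cut system, for every classical crossing $c$ of $D$ there are no cut points on the counting component $D_c^{\rm O}$, so trivially
\[
\rho^{\rm O}(c, \emptyset) = 0.
\]
By Lemma~\ref{proofmain2}, $\rho^{\rm O}(c)$ is independent of the chosen cut system, so $\rho^{\rm O}(c) = 0$ for every classical crossing $c$ of $D$. By Theorem~\ref{proofmain1}, $\rho^{\rm O}(c) = {\rm ind}^{\rm O}(c)$, and by definition ${\rm Ind}^{\rm ST}(c) = {\rm ind}^{\rm O}(c)$, so every classical crossing of $D$ has ${\rm Ind}^{\rm ST}(c) = 0$.

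Finally, for any non-zero integer $n$, the set $\{c \in X(D) \mid {\rm Ind}^{\rm ST}(c) = n\}$ is empty, and hence
\[
J_n(K) = J_n(D) = \sum_{c \in X(D),\ {\rm Ind}^{\rm ST}(c)=n} {\rm sgn}(c) = 0.
\]
The only conceptually non-trivial step is observing that the empty set qualifies as a cut system of an almost classical diagram; everything else is a direct application of the results already established. No serious obstacle is expected.
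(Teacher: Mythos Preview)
Your proof is correct and follows essentially the same approach as the paper: choose an almost classical diagram $D$, observe that the empty set is a cut system, conclude $\rho(c)=0$ for every classical crossing, and hence $J_n(D)=0$ for $n\neq 0$. The paper's version is terser (it writes $\rho(c)$ for $\rho^{\rm O}(c)$ and invokes $J'_n(D)=J_n(D)$ directly), but the underlying argument is identical.
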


\begin{proof}
Let $D$ be an  almost classical virtual knot diagram representing $K$.  
Since $D$ has an empty cut system, $\rho(c)=0$ for $c \in X(D)$ and hence $J_n(K) = J_n(D)= J'_n(D)=0$. 
\end{proof}

For example, the virtual knot diagram depicted in Figure~\ref{fgexAlexN2} (ii) is almost classical. Thus 
$\rho(c)=0$ for $c \in X(D)$ and hence the $n$-writhe is zero for $n \neq 0$.

\begin{prop}
Let $K$ be a mod $m$ almost classical virtual knot. Then we have $J_n(K)=0$ 
unless $ n \equiv 0 \pmod{m}$.
\end{prop}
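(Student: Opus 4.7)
The plan is to use the $n$-writhe formula
$J_n(K) = \sum_{{\rm Ind}^{\rm ST}(c)=n} {\rm sgn}(c)$
and to show that for every classical crossing $c$ of a mod $m$ Alexander-numbered diagram $D$ representing $K$ one has ${\rm Ind}^{\rm ST}(c) = {\rm ind}^{\rm O}(c, D) \equiv 0 \pmod m$. This forces $J_n(K) = 0$ whenever $n \not\equiv 0 \pmod m$, since no classical crossing of $D$ can then contribute to the sum defining $J_n$.

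Fix a mod $m$ Alexander numbering $\alpha$ of $D$ and a classical crossing $c$; let $\gamma_1, \dots, \gamma_k$ denote the classical crossings of $D$ encountered in order along $D_c^{\rm O}$. The first key step is the observation that the flat sign $\iota(\gamma_i, D_c^{\rm O})$ equals the change in $\alpha$, computed in $\mathbb{Z}_m$, as one traverses $\gamma_i$ along $D_c^{\rm O}$. Indeed, direct inspection of the Alexander convention in Figure~\ref{fgalexnum} shows that going under a crossing of sign $\epsilon\in\{\pm 1\}$ raises $\alpha$ by $\epsilon$ and going over raises $\alpha$ by $-\epsilon$, which matches the definition of $\iota$ exactly.

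The second step is to evaluate the total $\alpha$-change accumulated once around the closed loop $D_c^{\rm O}$. Because we return to the starting semi-arc in $D_c$, this total change vanishes in $\mathbb{Z}_m$, and splitting it into contributions from $\gamma_1, \dots, \gamma_k$ plus the jump $\Delta_c$ across the smoothed point of $c$ gives
$\sum_i \iota(\gamma_i, D_c^{\rm O}) + \Delta_c \equiv 0 \pmod m$.
Inspecting Figures~\ref{fig:smoothOUpositive} and~\ref{fig:smoothOUnegative} in conjunction with Figure~\ref{fgalexnum}, the two semi-arcs of $D$ joined by the smoothing in $D_c^{\rm O}$ lie on the same vertical side of $c$ and therefore carry the same $\alpha$-label (both $i+1$ for positive $c$, both $i$ for negative $c$); hence $\Delta_c = 0$. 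It follows that ${\rm ind}^{\rm O}(c, D) = \sum_i \iota(\gamma_i, D_c^{\rm O}) \equiv 0 \pmod m$, which finishes the argument.

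The only part requiring genuine care is the identification of the flat sign with the $\alpha$-change, and even that reduces to checking four local sign cases; everything else is immediate from the closed-loop principle. A parallel route, more in the spirit of Section~\ref{sect:cut}, would be to lift $\alpha$ arbitrarily to integer semi-arc labels and repair the resulting failure of the integer Alexander rule at each classical or virtual crossing by inserting $|km|$ oriented cut points whenever the discrepancy equals $km$; the resulting cut system $C$ has all cut points arranged in batches of $m$ with a common local orientation, so $\rho^{\rm O}(c, C)$ is a sum of contributions each divisible by $m$, and Theorem~\ref{proofmain1} then delivers the same conclusion.
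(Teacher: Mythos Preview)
Your argument is correct in substance: for a mod $m$ almost classical diagram $D$, tracking a mod $m$ Alexander numbering $\alpha$ around the closed loop $D_c^{\rm O}$ shows that $\sum_i \iota(\gamma_i, D_c^{\rm O}) \equiv 0 \pmod m$, and hence ${\rm Ind}^{\rm ST}(c) = {\rm ind}^{\rm O}(c,D)$ is divisible by $m$ for every classical crossing $c$, so no crossing contributes to $J_n(D)$ when $n \not\equiv 0 \pmod m$. Two cosmetic slips: with the paper's convention in Figure~\ref{fgalexnum} one actually has $\iota(\gamma_i, D_c^{\rm O}) = -(\text{change in }\alpha)$ rather than $+(\text{change in }\alpha)$, and for a positive $c$ the two semi-arcs of $D$ joined in $D_c^{\rm O}$ both carry the label $i$ (not $i+1$), with the roles reversed for negative $c$. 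Neither affects the conclusion, since the relevant statement is only that the sum vanishes modulo $m$ and that the two joined semi-arcs share the same $\alpha$-value.

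Your route is a bit more direct than the paper's. The paper phrases everything through the cut-system invariant $\rho(c)$ and the cut-writhe $J'_n$: it asserts that a mod $m$ almost classical diagram satisfies $\rho(c)\equiv 0\pmod m$ for all $c$, concludes $J'_n(D)=0$ for $n\not\equiv 0\pmod m$, and then invokes Corollary~\ref{thm:cutknot} to pass back to $J_n$. Your argument bypasses the cut-system translation entirely and reads the divisibility of ${\rm ind}^{\rm O}(c)$ straight off the mod $m$ Alexander numbering. Your closing remark about lifting $\alpha$ to integer labels and inserting cut points in batches of $m$ is exactly the cut-system justification the paper leaves implicit, so you have in effect supplied both proofs.
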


\begin{proof}
Let $D$ be a mod $m$ almost classical virtual knot diagram representing $K$. Then 
$\rho(c) \equiv 0 \pmod{m}$ for any $c \in X(D)$, which implies that $J'_n(D) = 0 $ unless $ n \equiv 0 \pmod{m}$. 
Thus $J_n(K) = J_n(D) = J'_n(D) = 0$  unless $ n \equiv 0 \pmod{m}$.  
\end{proof}

For example, let $K$ be  a virtual knot represented by the diagram $D$ depicted in Figure~\ref{fgexpolyinv}. Since $J_3(D) = 1$, we see that the virtual knot $K$ is not mod $m$ almost classical unless $m =3$ or $m=1$.

\section{Twisted knots and links}\label{sect:twisted}

A {\it twisted knot} (or {\it link}) {\it diagram} is a virtual knot (or link) diagram possibly with some bars on its arcs. 
In Figure~\ref{fgtwisted}, three twisted knot diagrams are depicted. 
{\it Twisted moves} are local moves depicted in Figure~\ref{fgtmoves}.

\begin{figure}[h]
\centerline{
\begin{tabular}{ccc}
\includegraphics[width=2.5cm, angle=90]{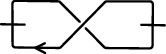}&\includegraphics[width=2.5cm,angle=0]{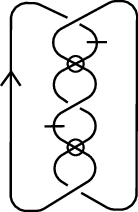}&
\includegraphics[width=4cm, angle=0]{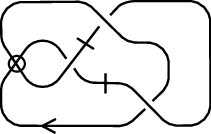}\\
$D_1$&$D_2$&$D_3$\\
(i)&(ii)&(iii)\\
\end{tabular}
}
\caption{Twisted knot diagrams}\label{fgtwisted}
\end{figure}

\begin{figure}[h]
\centerline{
\begin{tabular}{ccc}
\rmoveti{.5mm}&\rmovetiio{.5mm}&\rmovetiiio{.5mm}{2}\\
I& II &III \\
\end{tabular}}
\caption{Twisted moves}\label{fgtmoves}
\end{figure}

Two twisted  knot (or link) diagrams $D$ and $D'$ are said to be {\it equivalent} if they are related by a finite sequence of generalized Reidemeister moves (Figure~\ref{fgmoves}), twisted moves (Figure~\ref{fgtmoves}) and isotopies of $\mathbb{R}^2$.  
A {\it twisted knot\/} (or {\it link})   is an equivalence class of twisted knot (or link) diagrams. 

By definition, if two classical link diagrams are equivalent as classical links then they are equivalent as virtual links.  
If two virtual link diagrams are equivalent as virtual links then they are equivalent as twisted links.  
Thus we have two maps, 
$$ 
\Phi: \{ \text{\rm classical links} \} \to \{ \text{\rm virtual links} \} \quad \text{\rm and} \quad 
\Psi: \{ \text{\rm virtual links} \} \to \{ \text{\rm twisted links} \}. 
$$
It is known that $\Phi$ is injective (\cite{rGPV}) and so is $\Psi \circ \Phi: \{ \text{\rm classical links} \} \to \{ \text{\rm twisted links} \}$ (\cite{rbor, rkk2020}). Thus, virtual links and twisted links are generalizations of classical links.  
The map $\Psi$ is not injective. However, it is understood well when two virtual links are equivalent as twisted links (\cite{rkk2020}).

In \cite{rkk16}, a method of constructing a virtual link diagram called a {\it double covering diagram} from a given twisted link diagram is introduced.

\begin{thm}[\cite{rkk16}]\label{thm:doublecover}
Let $D$ and $D'$ be twisted link diagrams, and $\widetilde{D}$ and $\widetilde{D'}$  their double covering diagrams, respectively. 
If $D$ and $D'$ represent the same twisted link, then $\widetilde{D}$ and $\widetilde{D'}$ represent the same virtual link. 
\end{thm}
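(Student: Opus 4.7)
The plan is to verify, move by move, that the double covering construction sends each local equivalence on twisted link diagrams to a sequence of generalized Reidemeister moves on virtual link diagrams. Since twisted link equivalence is generated by planar isotopy, the seven generalized Reidemeister moves, and the three twisted moves of Figure~\ref{fgtmoves}, it suffices to show that whenever $D$ and $D'$ differ by one of these local moves inside a disk $B$, the diagrams $\widetilde{D}$ and $\widetilde{D'}$ differ inside the preimage of $B$ by a finite sequence of generalized Reidemeister moves. Invariance under planar isotopy is immediate from the definition of the double cover, so this reduces the theorem to a finite case analysis.

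First I would dispose of the moves that involve no bars. The double cover of such a disk consists of two disjoint parallel copies of the disk, with the local picture of the move appearing independently in each copy. Thus each classical Reidemeister move or virtual Reidemeister move on $D$ corresponds to two parallel applications of the same move on $\widetilde{D}$, yielding $\widetilde{D}\sim \widetilde{D'}$ as virtual links. The second step is to handle the three twisted moves. For twisted move~I, the bar together with the kink on the arc lifts to a pair of kinks joined across the two sheets by the bar crossing, and the resulting local tangle can be simplified using classical Reidemeister moves~I together with virtual Reidemeister moves. For twisted move~II, two consecutive bars on a single arc lift to a product of two sheet-interchanges, which becomes a pair of virtual crossings that cancel by virtual Reidemeister~II. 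For twisted move~III, a bar moved across a virtual crossing lifts to a configuration in which a pair of virtual crossings is pushed across another virtual crossing; this rearrangement can be carried out by virtual Reidemeister~III together with the mixed move virtual Reidemeister~IV.

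The main obstacle will be twisted move~III: the lifted diagram contains multiple virtual crossings arising from both the original virtual crossing and the two bars produced by the double cover, and verifying by hand that the before- and after-pictures are related by an explicit sequence of virtual moves requires drawing the two $4$-strand tangles carefully and tracking each strand through the cover. Once each of the three twisted-move cases is checked pictorially, the conclusion follows by concatenating the local sequences produced by the individual moves in any finite sequence relating $D$ to $D'$.
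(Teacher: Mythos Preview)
The paper does not prove this theorem; it simply cites \cite{rkk16}, where the result is established. So there is no proof in the present paper to compare against. Your overall strategy---reduce to a finite list of local moves and verify that the double covering construction sends each to a sequence of generalized Reidemeister moves---is exactly the approach taken in \cite{rkk16}, and is the natural one.

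However, your case analysis misidentifies the twisted moves as defined in Figure~\ref{fgtmoves}. Twisted move~I is a bar sliding past a \emph{virtual} crossing (there is no kink involved); twisted move~II is the cancellation of two adjacent bars on an arc (you have this one right); twisted move~III relates a \emph{classical} crossing with bars on all four incident arcs to a classical crossing of the opposite sign flanked by two virtual crossings. Your descriptions of moves~I and~III do not match these, so the local pictures you plan to analyse are not the ones that actually need checking. In particular, the genuinely delicate case is twisted move~III as stated: in the double cover the classical crossing and the four bars produce a configuration involving both copies of the crossing and the connecting arcs (which pass virtually over everything between the two sheets), and one must verify that this is virtually equivalent to the double cover of the right-hand side. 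Your proposed treatment of a ``bar moved across a virtual crossing'' is really the (much easier) case of move~I.

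A second point: in the construction of \cite{rkk16}, the double cover places $D$ and a reflected copy side by side in the \emph{same} plane, and each bar is replaced by a pair of arcs joining the two copies that cross every intervening strand virtually. So even for twisted move~II, the two ``sheet-interchanges'' you describe involve many virtual crossings, and their cancellation requires the full detour move (virtual Reidemeister~I--IV), not just a single virtual Reidemeister~II. Your sketch should be revised to reflect the actual construction before the case analysis can be carried out.
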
 

The {\it double cover\/} or the {\it double covering virtual link\/} of a twisted link $L$, denoted by $\widetilde{L}$, is defined as 
the virtual link represented by a double covering diagram of a diagram of $L$. 

Using the theorem, for a virtual link invariant $f$ we can obtain a twisted link invariant $\tilde{f}$ by 
$$ \tilde{f}(D) = f( \widetilde{D} ) \quad \text{\rm or} \quad \tilde{f}(L) = f( \widetilde{L} )$$ 
for a twisted link diagram $D$ or a twisted link $L$, where $\widetilde{D}$ is a double covering diagram of $D$ and $\widetilde{L}$ is the double covering virtual link of $L$.

\begin{definition}\label{def:doubledinvlink}{\rm 
For a twisted link diagram $D$, we define 
$\widetilde{J}^{\ast}_n(D)$, $\widetilde{P}^{\ast}_D(t)$ 
$( \ast \in \{ {\rm O}, {\rm U} \} )$ 
 and  $\widetilde{P}^{\rm OU}_L(s, t)$ to be 
$J^{\ast}_n(\widetilde{D})$, $P^{\ast}_{\widetilde{D}}(t)$ and $P^{\rm OU}_{\widetilde{D}}(s, t)$, respectively, where $\widetilde{D}$ is a double covering diagram of $D$. 
}\end{definition} 

By Theorem~\ref{thm:doublecover}, these are invariants of twisted links. 

When $D$ is a twisted knot diagram, by construction, the double covering diagram $\widetilde{D}$ is a virtual knot diagram or a $2$-component virtual link diagram.  We say that $D$ is of {\it odd type} (or {\it even type}) if  $\widetilde{D}$ is a virtual knot diagram (or a $2$-component virtual link diagram). A twisted knot diagram is of odd type (or even type) if and only if the number of bars on $D$ is odd (or even).  

For a virtual knot invariant $f$, we can obtain an invariant $\tilde{f}$ of a twisted knot of odd type by 
$$ \tilde{f}(D) = f( \widetilde{D} ) \quad \text{\rm or} \quad \tilde{f}(K) = f( \widetilde{K} )$$ 
for a twisted knot diagram $D$ or a twisted knot $K$ of odd type, where $\widetilde{D}$ is a double covering diagram of $D$ and $\widetilde{K}$ is the double covering virtual knot of $K$.   

\begin{definition}\label{def:doubledinvknot}{\rm 
For a twisted knot diagram $D$ of odd type, we define 
$\widetilde{J}_n(D)$ $(n \neq 0)$ and $\widetilde{P}_D(t)$ 
to be 
$J_n(\widetilde{D})$ and $P_{\widetilde{D}}(t)$, respectively, where $\widetilde{D}$ is a double covering diagram of $D$. 
}\end{definition} 

By Theorem~\ref{thm:doublecover}, these are invariants of twisted knots of odd type.

\section{An invariant of twisted knots and links}\label{sect:newtwistedinv}

In this section, we introduce a new twisted link invariant using over and under affine indices 
${\rm ind}^{\rm O}(c)$ and ${\rm ind}^{\rm U}(c)$ modulo $2$. 

Let $D$ be a twisted link diagram and 
${\rm Self}X(D)$ be the set of classical self-crossings of $D$. 
Let $c \in {\rm Self}X(D)$ and let $D_c$ be the twisted link diagram obtained from $D$ by smoothing at $c$. 
The over counting component $D_c^{\rm O}$ and the under counting component $D_c^{\rm U}$ for $D$ at $c$ are defined as in 
Figure~\ref{fig:smoothOUpositive} or \ref{fig:smoothOUnegative}.  

For $ \ast \in \{ {\rm O}, {\rm U} \} $, 
let $\bar{\rho}^{\ast}(c)$ be $1$ (or  $0$) if ${\rm ind}^{\ast}(c)$ is odd (or even).  
Here we assume that ${\rm ind}^{\ast}(c)$ 
$( \ast \in \{ {\rm O}, {\rm U} \} )$ 
is defined as before by ignoring bars appearing on the counting component $D_c^{\ast}$. 

For $ \ast \in \{ {\rm O}, {\rm U} \} $, 
let $p^{\ast}(c)$ be $1$ (or  $0$) if the number of bars on $D_c^{\ast}$ is odd (or even). 

\begin{definition}\label{def:newtwistedinv}{\rm 
For a twisted link diagram $D$, we define a polynomial $Q_D(s,t)$ by 

$$Q_D(s,t)=\sum_{c \in {\rm Self}X(D)} {\rm sgn}(c)(s^{\bar{\rho}^{\rm O}(c)}t^{p^{\rm O}(c)}-1)(s^{\bar{\rho}^{\rm U}(c)}t^{p^{\rm U}(c)}-1).$$
}
\end{definition}

\begin{thm}\label{thm:newtwistedinv}
The polynomial $Q_D(s,t)$ is an invariant of a twisted link.
\end{thm}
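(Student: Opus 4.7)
Plan. The polynomial $Q_D(s,t)$ is a mod-$2$ refinement of the over-under affine index polynomial $P^{\rm OU}_D(s,t)$ of Definition~\ref{def:virtuallinkinvariants}, enriched with a bar-parity factor $t^{p^*(c)}$. I plan to mimic the proof of Theorem~\ref{virtuallinkinvariants} for the generalized Reidemeister moves and then to verify each of the three twisted moves separately.

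For the virtual Reidemeister moves none of the relevant data (self-crossings, signs, counting components, bars) changes, so $Q_D$ is manifestly unchanged. For the classical Reidemeister moves, the case analysis from Theorem~\ref{virtuallinkinvariants} applies after reducing modulo $2$ and additionally tracking bars. In R\,I the new crossing $c_1$ has a counting component that is a small crossing-free and bar-free loop, so one of the factors $(s^{\bar{\rho}^*(c_1)}t^{p^*(c_1)}-1)$ vanishes; in R\,II the two new crossings have opposite signs and identical $(\bar{\rho}^{\rm O},\bar{\rho}^{\rm U},p^{\rm O},p^{\rm U})$ data and therefore cancel; in R\,III the three crossings on each side correspond pairwise with matching signs and matching parity data, and the remaining self-crossings are unaffected.

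Twisted moves I and II do not touch the set of classical self-crossings. In T\,I the bar slides along its own semi-arc through a virtual crossing, and since a virtual crossing does not disconnect a component, the bar continues to lie on the same counting components as before, so each $p^*(c)$ is preserved. In T\,II two bars on a common semi-arc are cancelled; for any self-crossing $c$ they both lie on the same counting components, so $p^*(c)$ changes by $0$ or $2$, preserving its parity. The quantities $\bar{\rho}^*(c)$ are unaffected in both moves.

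The main case is twisted move III, which removes a classical self-crossing $c_1$ together with the four bars on its adjacent semi-arcs and installs two virtual crossings without bars. For this I plan a case analysis on how the counting components $D_c^{\rm O}$ and $D_c^{\rm U}$ of each $c\ne c_1$ traverse the local region (through zero, one, or both of the two local strands). The key facts will be: (a) the four local bars always contribute an even number to any counting component passing through, preserving each $p^*(c)$; (b) a counting component traversing both local strands meets $c_1$ twice with opposite flat signs $\pm{\rm sgn}(c_1)$, so $c_1$'s contribution to ${\rm ind}^*(c)$ vanishes modulo $2$, whereas a counting component traversing only one strand shifts $\bar{\rho}^*(c)$ by~$1$; and (c) the four local bars split as two on $D_{c_1}^{\rm O}$ and two on $D_{c_1}^{\rm U}$, contributing $0$ modulo $2$ to each $p^*(c_1)$. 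The main obstacle is the final bookkeeping step, namely verifying that the disappearance of $c_1$'s summand together with the parity shifts of $\bar{\rho}^*(c)$ for other self-crossings whose counting components traverse exactly one local strand produces an exact algebraic cancellation in $Q_D - Q_{D'}$.
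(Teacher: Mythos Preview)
Your treatment of the generalized Reidemeister moves and of twisted moves I and II is fine and matches the paper's argument. The genuine gap is in twisted move III: you have misread the move. On the right-hand side of Figure~\ref{fgtmoves} there are not merely two virtual crossings; there is a classical crossing $c'_1$ sandwiched between two virtual crossings (and no bars). So $c_1$ is not removed --- it is replaced by $c'_1$, with ${\rm sgn}(c_1)={\rm sgn}(c'_1)$, and $X(D)\setminus\{c_1\}=X(D')\setminus\{c'_1\}$.

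With the correct picture the argument is much shorter than what you propose. For $c\neq c_1$ the counting components $D_c^{\rm O}$, $D_c^{\rm U}$ are unchanged outside the local region; inside, each traversed strand carries two of the four bars (so $p^\ast(c)$ is unchanged) and meets $c'_1$ in place of $c_1$ (whose flat-sign contribution is the same modulo $2$), so $\bar\rho^\ast(c)$ is unchanged. For $c_1$ itself, the effect of the two virtual crossings on the right is that $D_{c_1}^{\rm O}$ coincides with ${D'}_{c'_1}^{\rm U}$ and $D_{c_1}^{\rm U}$ with ${D'}_{c'_1}^{\rm O}$ outside the local region; combined with your observation that the four local bars split two--two, this gives $\bar\rho^{\rm O}(c_1)=\bar\rho^{\rm U}(c'_1)$, $p^{\rm O}(c_1)=p^{\rm U}(c'_1)$ and vice versa. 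Since the summand in $Q_D$ is symmetric under swapping the O and U factors, the contribution of $c_1$ equals that of $c'_1$, and $Q_D=Q_{D'}$. There is no ``disappearance'' of $c_1$'s summand and no compensating parity shifts at other crossings to chase --- the elaborate bookkeeping you anticipate as the main obstacle simply does not arise, and in fact could not succeed under your reading of the move.
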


\begin{proof}

Virtual Reidemeister moves do not change $Q_D(s,t)$. Thus, it is sufficient to show that $Q_D(s,t)$ is preserved under Reidemeister moves and twisted moves. 

(i) Let $D'$ be obtained from $D$ by a Reidemesiter move I as in Figure~\ref{fgmoves}, where we assume 
$X(D') = X(D) \cup \{ c_1 \}$. 
For $c \in X(D) = X(D') \setminus \{ c_1 \}$, we see that 
$\bar{\rho}^{\ast}(c)$ and $p^{\ast}(c)$ $( \ast \in \{ {\rm O}, {\rm U} \} )$ do not change.  
Since 
$\bar{\rho}^{\ast}(c_1)=0$ and $p^{\ast}(c_1)=0$ for at least one $\ast \in \{ {\rm O}, {\rm U} \}$,  
we have $Q_D(s,t) = Q_{D'}D(s,t)$.  

(ii) Let $D'$ be obtained from $D$ by a Reidemesiter move II as in Figure~\ref{fgmoves}. 
For $c \in X(D) = X(D') \setminus \{ c_1, c_2 \}$, we see that  
$\bar{\rho}^{\ast}(c)$ and $p^{\ast}(c)$ $( \ast \in \{ {\rm O}, {\rm U} \} )$ do not change.  
Note that $\bar{\rho}^{\ast}(c_1) = \bar{\rho}^{\ast}(c_2)$ and $p^{\ast}(c_1) = p^{\ast}(c_2)$. 
Since ${\rm sgn}(c_1) = - {\rm sgn}(c_2)$, we see that the contribution of 
$c_1$ cancels with that of $c_2$.  

(iii) Let $D'$ be obtained from $D$ by a Reidemesiter move III as in Figure~\ref{fgmoves}. 
For $c \in X(D) \setminus \{ c_1, c_2, c_3 \} = X(D') \setminus \{ c'_1, c'_2, c'_3 \}$, we see that  
$\bar{\rho}^{\ast}(c)$ and $p^{\ast}(c)$ $( \ast \in \{ {\rm O}, {\rm U} \} )$ do not change.  
Note that, for $i \in \{1, 2, 3\}$, ${\rm sgn}(c_i) = {\rm sgn}(c'_i)$ and for $\ast \in \{ {\rm O}, {\rm U} \}$,  
$\bar{\rho}^{\ast}(c_i) = \bar{\rho}^{\ast}(c'_i)$ and $p^{\ast}(c_i) = p^{\ast}(c'_i)$. 
Thus we have $Q_D(s,t) = Q_{D'}D(s,t)$.

(iv) Let $D'$ be obtained from $D$ by a twisted move I or II.  
In this case it is obvious that $Q_D(s,t) = Q_{D'}D(s,t)$.  

(v) Let $D'$ be obtained from $D$ by a twisted move III as in Figure~\ref{fgtmoves}. 
For $c \in X(D) \setminus \{ c_1 \} = X(D') \setminus \{ c'_1 \}$, we see that  
$\bar{\rho}^{\ast}(c)$ and $p^{\ast}(c)$ $( \ast \in \{ {\rm O}, {\rm U} \} )$ do not change.  
Note that 
$D_{c_1}^{\rm O} = {D'}_{c'_1}^{\rm U}$ and 
$D_{c_1}^{\rm U} = {D'}_{c'_1}^{\rm O}$ 
outside of the region where the move III is applied. 
Then we see that 
$$
\bar{\rho}^{\rm O}(c_1) = \bar{\rho}^{\rm U}(c'_1),  \quad 
p^{\rm O}(c_1) = p^{\rm U}(c'_1)  \quad \text{\rm and} \quad 
\bar{\rho}^{\rm U}(c_1) = \bar{\rho}^{\rm O}(c'_1),  \quad  
p^{\rm U}(c_1) = p^{\rm O}(c'_1). 
$$
Since ${\rm sgn}(c_1) = {\rm sgn}(c'_1)$, we have $Q_D(s,t) = Q_{D'}D(s,t)$.  

Thus we see that $Q_D(s,t)$ is an invariant of a twisted link. 
\end{proof}

When $D$ is a twisted knot diagram, $\bar{\rho}^{\rm O}(c) = \bar{\rho}^{\rm U}(c)$. Hence we may compute one of 
$\bar{\rho}^{\rm O}(c)$ and $\bar{\rho}^{\rm U}(c)$ for each $c \in {\rm Self}X(D)$. 

\begin{example}{\rm 
For the twisted knot diagram $D_1$ depicted in Figure~\ref{fgtwisted} (i), we have
$Q_{D_1}(s,t)=(t-1)^2$.   It implies that $D_1$ is not equivalent to the unknot.
In \cite{rkn2013}, the first author introduced a polynomial invariant which is similar to $Q_D(s,t)$. But the invariant is not the same with ours in this paper.  The invariant defined in \cite{rkn2013} does not distinguish $D_1$ and the unknot.
}\end{example}

In \cite{rIK2023} three kinds of index polynomials $\psi$, $\psi_{+1}$ and $\psi_{-1}$ are defined for a twisted link. 
The invariants in \cite{rIK2023} can also distinguish $D_1$ and the unknot.

\begin{example}{\rm 
For the twisted knot diagrams $D_2$ and $D_3$ depicted in Figure~\ref{fgtwisted}, we have
$Q_{D_2}(s,t)=2(t-1)^2$ and $Q_{D_3}(s,t)=-2(st-1)^2$. So the invariant $Q_D(s,t)$ can distinguish $D_2$ and $D_3$ as twisted knots.
In \cite{rSaw}, J. Sawollek introduced a virtual link invariant, which is so-called the JKSS invariant,  from an invariant of links in thickened closed oriented surfaces defined by F. Jaeger, L. Kauffman, and H. Saleur \cite{rjks}.  In \cite{rkk16}, we introduced a method called the double covering, of sending twisted links to virtual links. 
Taking the JKSS invariant of the double covering, we have a twisted link invariant discussed in \cite{rkn20170}.  The values of this invariant of $D_2$ and $D_3$ are both zero. Hence they cannot be distinguished by the JKSS invariants of the double covers. 
}\end{example}



\end{document}